\numberwithin{equation}{section}
\newtheorem{thm}[equation]{Theorem}
\newtheorem{prop}[equation]{Proposition}
\newtheorem{lem}[equation]{Lemma}
\newtheorem{remk}[equation]{Remark}
\newcounter{mycount}
\newenvironment{romlist}{\begin{list}{\rm(\roman{mycount})}
   {\usecounter{mycount}\labelwidth=1cm\itemsep 0pt}}{\end{list}}
\DeclareMathSymbol{\leqslant}{\mathalpha}{AMSa}{"36}
\DeclareMathSymbol{\geqslant}{\mathalpha}{AMSa}{"3E}
\renewcommand{\le}{\;\leqslant\;}
\renewcommand{\ge}{\;\geqslant\;}
\def\O{{\mathrm O}}
\def\qq{\qquad}
\def\d{\delta}
\def\ZZ{{\mathbb Z}}
\def\RR{{\mathbb R}}
\def\EE{{\mathbb E}}
\def\PP{{\mathbb P}}
\def\Om{\Omega}
\def\om{\omega}
\def\De{\Delta}
\def\oo{\infty}
\def\sC{\mathcal C}
\def\al{\alpha}
\def\be{\begin{equation}}
\def\ee{\end{equation}}
\def\eps{\varepsilon}
\def\no#1{\|#1\|_1}
\def\nt#1{\|#1\|_2}
\def\sm{\setminus}
\def\var{\mathrm{var}}
\def\ivf{I_\v(f)}
\def\vectornotation#1{{\mathsf #1}}
\def\0{\vectornotation 0}
\def\e{\vectornotation e}
\def\p{\vectornotation p}
\def\q{\vectornotation q}
\def\u{\vectornotation u}
\def\v{\vectornotation v}
\def\x{\vectornotation x}
\def\y{\vectornotation y}
\def\z{\vectornotation z}
\def\Z{\vectornotation Z}
\def\a{\vectornotation a}
\def\b{\vectornotation b}
\def\ml#1{\sloppy\mbox{#1}\fussy}
\def\qtext#1{\quad\text{#1}\quad}
\def\lra{\leftrightarrow}
\def\bcdot{\,\cdot\,}
\def\pfo{\tfrac{\partial f}{\partial \om(\v)}}
\def\zzd{\ZZ^d}
\def\zzdm{\ZZ^{d-1}}
\begin{document}
\title{Sublinear variance for directed last-passage percolation}
\author{B.\ T.\ Graham\\
   {\small DMA--\'Ecole Normale Sup\'erieure, 45 rue d'Ulm}\\
   {\small 75230 Paris Cedex 5, France}\\
   {\small \tt graham@dma.ens.fr}
}
\maketitle
\begin{abstract}
\noindent
A range of first-passage percolation type models are believed to demonstrate the related properties of sublinear variance and superdiffusivity.  We show that directed last-passage percolation with Gaussian vertex weights has a sublinear variance property. We also consider other vertex weight distributions.

Corresponding results are obtained for the ground state of the `directed polymers in a random environment' model.\\

\noindent
{\bf Keywords} directed last-passage percolation, directed polymers in a random environment, sublinear variance, concentration, strict convexity.
\\

\noindent
{\bf Mathematics Subject Classification (2000)} 60K35, 82B41.
\end{abstract}

\section{First- and last-passage percolation}
Let $G=(V,E)$ be a graph, and let $\om=(\om(e):e\in E)$ be a collection of independent, identically distributed edge weights. The weight of a path from $x$ to $y$ is the sum of the edge weights along the path. The first-passage time $S(x,y)$ is defined to be the weight of the lightest path from $x$ to $y$.

Consider first-passage percolation on the $d$-dimensional hypercubic lattice $\ZZ^d$ with nearest-neighbor edges. When $d=1$, the variance of $S(0,n)$ is proportional to $|n|$. In contrast, for $d\ge 2$, the variance of $S(\0,\x)$ is sublinear as a function of $|\x|$ for a wide range of edge weight distributions \cite{BR,BKS}. 

Directed last-passage percolation is a variant of first-passage percolation that is defined on directed lattices; for an introduction see \cite{MartinSurvey}.  Let \ml{$(\e_i:i=1,\dots,d)$} denote the standard basis for $\RR^d$.  
The set
\be\label{E def}
\vec{E}=\vec{E}(\zzd):=\{\x\to\x+\e_i:\x\in\zzd\text{ and }i=1,\dots,d\}
\ee
of directed nearest-neighbor edges turns $\zzd$ into a directed graph.
The coordinate-wise partial order on $\zzd$ is defined by $\x\le\y$ iff $x_i\le y_i$ for $i=1,\dots,d$. A directed path exists from $\x$ to $\y$ iff $\x\le\y$. Let $\om=(\om(\v):\v\in\zzd)$ be independent, identically distributed vertex weights. For $\x\le\y\in\zzd$, the last-passage time is defined
\[
T(\x,\y)=\sup_\gamma \sum_{\v\in \gamma} \om(\v);
\]
the supremum is over directed paths $\gamma$ from $\x$ to $\y$. By convention, $\gamma$ is identified with the set of vertices along the path, but with the starting point $\x$ excluded. On $\RR^d$, we will use $|\bcdot|$ to denote the $L_1$ norm: all of the directed paths from $\x$ to $\y$ have length $|\x-\y|$. 

Assume now that $\EE(|\om(\v)|)<\oo$. By \cite[Proposition 2.1]{MartinSurvey}, we can define the asymptotic last-passage time in direction $\x\in\RR_+^d$,
\begin{align}\label{lim_def}
g(\x)=\lim_{N\to\oo} T(\0,\lfloor N\x\rfloor)/N \in(-\oo,\oo].
\end{align}
The resulting function $g$ is homogeneous of degree 1 and concave. 
If $\int_0^\oo \PP(\om(\v)>t)^{1/d}\,{\mathrm d}t$ is finite, then $g$ is finite and continuous \cite[Theorems 4.1 and 4.4]{MartinSurvey}. 

Much of the interest in first- and last-passage percolation is due to the related phenomena of sublinear variance and superdiffusivity. The most complete results are available in the case of directed last-passage percolation on $\ZZ^2$ with geometric vertex weights \cite{BDLPX,JohanssonShapeFluctuations}. 
That particular model is well understood due to its relationship with random matrices---the function $g$ is even known exactly. The variance of $T(\0,N(\e_1+\e_2))$ has order $N^{2/3}$ as $N\to\oo$; this surprising behavior is known as sublinear variance. Let $\gamma$ denote a maximal path from $\0$ to $N(\e_1+\e_2)$. The typical deviations of $\gamma$ from the line $x=y$ are of order $N^{2/3}$; this is known as superdiffusivity.

Other models display remarkably similar behavior, and so are said to belong to the same universality class.  Let $P$ denote a Poisson process with rate $1$ on the square $[0,N]^2$. Consider sequences of points in $P$ that are increasing with respect to the coordinate-wise partial order, and let $\gamma$ denote such a sequence with maximal length. The variance of the length of $\gamma$ has order $N^{2/3}$, and $\gamma$ is superdiffusive \cite{JohanssonPermutations}.

It is believed that first- and last-passage percolation belong to this universality class for a wide range of weight distributions. However, proving that in general $\var[S(\0,\x)]$ has order $|\x|^{2/3}$ seems to be rather difficult. Using Talagrand's work on concentration \cite{Talagrand_Russo}, Benjamini, Kalai and Schramm showed that for first-passage percolation with Bernoulli-type edge weights, $\var[S(\0,\x)]=\O(|\x|/\log|\x|)$ \cite{BKS}. Using a more sophisticated concentration result, Bena\"im and Rossignol extended this result to a range of `gamma-like' edge weight distributions \cite{BR}.

In this paper we adapt the arguments from \cite{BR,BKS} to show that directed last-passage percolation with Gaussian vertex weights has a sublinear variance property.  
\begin{thm}\label{variance_thm}
Let $d\ge 2$ and let $\om=(\om(\x):\x\in\zzd)$ be a collection of independent, standard normal random variables. Let $\u=\e_1+\dots+\e_d$. For directed last-passage percolation on $\zzd$,
\[
\var[T(\0,N\u)]=\O(N/\log N).
\]
\end{thm}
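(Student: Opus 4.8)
The plan is to transpose the Benjamini--Kalai--Schramm and Bena\"im--Rossignol arguments \cite{BKS,BR} to the Gaussian last-passage setting. Every directed path from $\0$ to $N\u$ stays in the box $\{0,\dots,N\}^d$, so $T:=T(\0,N\u)$ is a convex, piecewise-linear function of the finitely many weights $(\om(\v):\0\le\v\le N\u)$; since these are absolutely continuous the maximising path $\gamma(\0,N\u)$ is a.s.\ unique, and $\partial T/\partial\om(\v)=\mathbf 1[\v\in\gamma(\0,N\u)]$ a.s. The ordinary Gaussian Poincar\'e inequality then gives only the linear bound $\var[T]\le\sum_\v\EE[(\partial T/\partial\om(\v))^2]=\EE|\gamma(\0,N\u)|=Nd$. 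To gain the extra logarithm I would use a Talagrand-type strengthening (the Gaussian analogue of the edge-weight inequalities of \cite{BKS,BR}): for locally Lipschitz $f$ on Gaussian space,
\[
\var[f]\ \le\ C\sum_\v\frac{\nt{\partial f/\partial\om(\v)}^2}{\,1+\log\big(\nt{\partial f/\partial\om(\v)}\,/\,\no{\partial f/\partial\om(\v)}\big)\,},
\]
which one proves by writing $\var[f]=2\int_0^\oo e^{-2t}\sum_\v\nt{P_t\,\partial_\v f}^2\,\mathrm dt$ along the Ornstein--Uhlenbeck semigroup $(P_t)$, bounding $\nt{P_t\,\partial_\v f}$ by $\|\partial_\v f\|_{1+e^{-2t}}$ via hypercontractivity, and integrating over $t$. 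This beats Poincar\'e only when no coordinate carries a dominant influence, which fails for $T$ near the two corners $\0$ and $N\u$; a smoothing step is therefore needed first.

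Fix $m=m(N)$, a small power of $N$ (say $m=\lfloor N^{1/3}\rfloor$), put $B_0=\{\0\le\z\le(m-1)\u\}$ and $B_1=N\u-B_0$, and set $F:=|B_0|^{-2}\sum_{\z\in B_0}\sum_{\w\in B_1}T(\z,\w)$; averaging at both corners keeps the construction symmetric under $\x\mapsto N\u-\x$. Since $\var[T]\le2\var[F]+2\,\EE[(T-F)^2]$, it suffices to show $\var[F]=\O(N/\log N)$ and $\EE[(T-F)^2]=o(N/\log N)$. For the error term, a surgery argument --- prepend and append staircase segments inside $B_0$ and $B_1$ to maximising paths --- shows $|T-F|$ is dominated by last-passage times over $\O(m)$ vertices near the corners, whose second moments are $\O(m^2)$; hence $\EE[(T-F)^2]=\O(m^2)=\O(N^{2/3})=o(N/\log N)$.

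For $F$ one has $\partial F/\partial\om(\v)=|B_0|^{-2}\#\{(\z,\w)\in B_0\times B_1:\v\in\gamma(\z,\w)\}\in[0,1]$. Writing $\bar p_\v:=\EE[\partial F/\partial\om(\v)]$, the bound $x^2\le x$ on $[0,1]$ gives $\nt{\partial F/\partial\om(\v)}^2\le\bar p_\v$, while $\nt{\partial F/\partial\om(\v)}^2\ge\bar p_\v^2$; a one-line monotonicity check then turns each summand of the displayed inequality into at most $\bar p_\v/(1+\tfrac12\log(1/\bar p_\v))$. Since $x\mapsto x/(1+\tfrac12\log(1/x))$ is increasing on $(0,1)$ and $\sum_\v\bar p_\v=|B_0|^{-2}\sum_{\z,\w}|\w-\z|\le Nd$ (a maximising path meets every layer exactly once), this yields $\var[F]\le C\,Nd\,/\,(1+\tfrac12\log(1/\max_\v\bar p_\v))$. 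The whole theorem thus reduces to showing $\max_\v\bar p_\v\le N^{-c}$ for some $c=c(d)>0$.

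Proving this uniform influence bound is the \emph{main obstacle}. For $\v$ within distance $\O(m)$ of a corner the set of admissible $\z$ (respectively $\w$) is small enough that a direct layer-by-layer count gives $\bar p_\v\le m^{-c'}$. For $\v$ in the bulk one writes $\PP(\v\in\gamma(\z,\w))$ as the probability that the maximiser from $\z$ to $\w$ crosses the layer of $\v$ precisely at $\v$; summing over $\z\in B_0$, $\w\in B_1$ and using that there are $\asymp\ell^{d-1}$ lattice points at distance $\ell$, the estimate closes \emph{provided} one has a weak lower bound on the transverse wandering of the geodesic, namely $\PP(\v\in\gamma(\x,\y))\le|\x-\y|^{-\delta}$ for some fixed $\delta>0$, uniformly over $\x\le\v\le\y$. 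Any such polynomial spread, combined with averaging over boxes of side $m=N^{1/3}$, forces $\max_\v\bar p_\v\le N^{-c}$; balancing this against the $\O(m^2)$ smoothing error then completes the argument. For Gaussian weights such a spread estimate can be extracted by a weight-swap symmetrisation near the diagonal (where the lattice geometry is nearly symmetric over a range $\asymp\sqrt\ell$) or from strict convexity of the limit shape, and controlling its approximation errors is the delicate part. Finally, replacing the Gaussian inequality above by the corresponding modified Poincar\'e inequality for another weight law --- exactly as in \cite{BR} --- gives the analogous statements for other vertex-weight distributions and for the directed-polymer ground state.
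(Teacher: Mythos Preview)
Your framework is right, but there is a real gap at the smoothing step, and it is precisely the obstacle that separates the directed problem from undirected first-passage percolation. You claim $\EE[(T-F)^2]=\O(m^2)$ by surgery, but prepending and appending staircases to a $\z\to\w$ geodesic only yields $T(\0,N\u)\ge T(\z,\w)+(\text{corner terms})$; for the reverse inequality you would need to truncate the $\0\to N\u$ geodesic to a $\z\to\w$ path, and in general it passes through neither $\z$ nor any point $\ge\z$ until much later. In undirected first-passage percolation this is free because $S$ is a metric, so $|S(\0,\x)-S(\z,\x)|\le S(\0,\z)$; here $T$ is not a metric, and the paper says so explicitly. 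The paper's Lemma~\ref{shift_lemma} gives only $\nt{T(-\e_1,N\u)-T(\0,N\u)}=\O(N^{1/4}\log N)$---note the $N^{1/4}$, not $\O(1)$---and proving even that already requires both the concentration estimate (Lemma~\ref{conc_d}) and the strict-concavity input (Lemma~\ref{not_flat}). Iterating over $\O(m)$ unit shifts with your choice $m=N^{1/3}$ would give $\EE[(T-F)^2]=\O(m^2N^{1/2}(\log N)^2)$, which exceeds $N/\log N$.

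The paper sidesteps both this error term and your ``main obstacle'' at once by using a \emph{random translate} instead of an average: with $\Z$ a random point in $\{1,\dots,m\}^d$ built (as in \cite{BKS}) from auxiliary Bernoulli bits, $f=T(\Z,N\u+\Z)$ has exactly the law of $T(\0,N\u)$, so there is no smoothing error at all. The cost is an extra term $I_S(f)=\sum_s\nt{\De_s f}^2$ in the concentration inequality (Lemma~\ref{conc_lem}); since flipping one bit moves $\Z$ by a unit vector, each $\nt{\De_s f}$ is controlled by Lemma~\ref{shift_lemma}, and with $m=\lfloor N^{1/8}\rfloor$ one gets $I_S(f)=\O(N^{7/8})$. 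Moreover, because $\Z$ is independent of the weights, $\ivf=\PP(\v\in\gamma+\Z)\le dm\,(c/m)^d=\O(1/m)$ by a one-line counting argument (a directed path meets any translate of the $m$-box in at most $dm$ points), so no transverse-wandering estimate is needed anywhere. Both of your hard steps thus collapse into the single shift Lemma~\ref{shift_lemma}, and that is where the concavity of $g$ actually enters.
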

The proof of Theorem \ref{variance_thm} can be adapted to other vertex weight distributions including the uniform $[0,1]$ and gamma distributions. We discuss this in Section \ref{other}. 

The main ingredient in our proof is the theory of concentration. In the present context, the concentration result of Bena\"im and Rossignol can be written in a particularly simple form. We do this by extending the concept of influence from the discrete to the continuous setting---see Section \ref{formal_defs}. 

It is believed that for many vertex weight distributions, the asymptotic traversal time is strictly concave. However, in general it is an open problem to determine which vertex weight distributions produce strictly concave $g$. In Section \ref{sec:not_flat}, we show that $g(\x)/|\x|$ is increasing with the `dimensionality' of $\x$. This excludes one of the ways $g$ can fail to be strictly concave, and it suffices for the purpose of proving Theorem \ref{variance_thm}---see Section \ref{variance_section}.

Directed last-passage percolation is related to another model defined on $\ZZ^d$: directed polymers in a random environment \cite{CometsShigaYoshida,Piza}. A polymer is a directed path, starting from the origin and with fixed length. The ground state of the directed polymer model corresponds very closely to directed last passage percolation. The random environment is the collection of vertex weights. The weight of a polymer is the sum of the weights along its length, and the objects of interest are polymers of maximal weight.

There are actually two versions of the model. Piza \cite{Piza} consider the random polymer model on $(\zzd,\vec{E})$. Most other studies of the directed polymer model have considered a different directed graph. Think of $\zzd$ as space-time, with $d-1$ nearest-neighbor space dimensions and one time dimension. 

In two dimensions, the two graphs are actually equivalent, up to rotation by 45$^\circ$.
In \cite{0810.4221}, Chatterjee shows that the ground state of the two-dimensional directed polymer model with Gaussian vertex weights has the sublinear variance property, and asks if the same is true in higher dimensions. 
In both cases, the answer is yes. See Section \ref{sec dp}.

\section{Notation}
From now on, the coordinate axes of $\RR^d$ will be labeled $x_1,x_2,\dots,x_d$. 

Let $\PP=\bigotimes_{i\in I} \PP_i$ denote a product measure that supports the vertex weights, and possibly some auxiliary random variables. Let $\EE$ denote the corresponding expectation operator, and let $\Om$ denote the state space. 
For $i\in I$, define operators $\De_i$,
\[
\forall f\in L^2(\PP),\quad \De_i f = f - \int f\ {\mathrm d}\PP_i.
\]
For a set $S$ and functions $f,g:S\to\RR$, we will write
\begin{align*}
f(x)&=\O(g(x)) \text{ if } \exists\, c>0 \text{ such that } f(x) \le cg(x) \ \,\text{ when } g(x)\ge c\,,\text{ and}\\
f(x)&=\Om(g(x)) \text{ if } \exists\, c>0 \text{ such that } f(x) \ge g(x)/c \text{ when } g(x)\ge c\,.
\end{align*}
For example, $a+1/(1+b^2) = \O(a)$ for $(a,b)\in\RR^2$.

\section{Influence and concentration}\label{formal_defs}
Martin \cite[Theorem 5.1]{MartinLimitingShapeDP} applied concentration techniques to directed last-passage percolation to prove a `shape' theorem:
\begin{prop}\label{shape proposition}
If 
\be\label{shape proposition condition}
\int_0^\oo \PP(|\om(\v)|>t)^{1/d}\, {\mathrm d}t <\oo,
\ee
then $\EE[T(\0,\x)-g(\x)]/|\x|\to 0$ as $|\x|\to\oo$, and, with probability $1$
\begin{align*}
\sup_{\x\, :\, |\x|\ge N} |T(0,\x)-g(\x)|/|\x| \to 0 \qtext{as} N\to\oo.
\end{align*}
\end{prop}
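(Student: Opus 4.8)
The plan is to follow the classical route to a shape theorem, using concentration in place of the monotonicity in the endpoint that holds for nonnegative weights but fails for signed ones. First recall the pointwise limit: for $\x\in\ZZ^d$ with $\0\le\x\ne\0$, the superadditivity $T(\0,(m+n)\x)\ge T(\0,m\x)+T(m\x,(m+n)\x)$ together with translation invariance lets Kingman's subadditive ergodic theorem give $T(\0,n\x)/n\to g(\x)$ almost surely and in $L^1$; this underlies \eqref{lim_def}, and degree-$1$ homogeneity passes it to rational directions. For this, $\EE|\om(\v)|<\oo$ suffices; the stronger assumption \eqref{shape proposition condition}, together with Theorems~4.1 and~4.4 of \cite{MartinSurvey}, is what makes $g$ finite and continuous on $\RR_+^d$. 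Note also that $\mu(\x):=\EE T(\0,\x)$ is itself superadditive, $\mu(\x+\y)\ge\mu(\x)+\mu(\y)$, so $g(\x)=\sup_n\mu(n\x)/n$.

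The second step is the almost sure bound $\max\{\,|T(\0,\x)-\mu(\x)|:\x\ge\0,\ |\x|=n\,\}=\o(n)$. For a single endpoint, $T(\0,\x)$ is a function of the i.i.d.\ weights $(\om(\v):\v\le\x)$ that is $\O(\sqrt n)$-Lipschitz in the Euclidean norm, since a directed path to $\x$ has $n$ vertices; in the Gaussian case of Theorem~\ref{variance_thm}, Gaussian isoperimetry then gives $\PP(|T(\0,\x)-\mu(\x)|>\eps n)\le\exp(-c\eps^2 n)$ for a constant $c>0$ and all large $n$, while under the general hypothesis \eqref{shape proposition condition} the Talagrand-type concentration of \cite{BR,Talagrand_Russo} (after truncating the weights) gives a tail decaying faster than any power of $n$. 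Since there are only $\binom{n+d-1}{d-1}=\O(n^{d-1})$ endpoints $\x\ge\0$ with $|\x|=n$, a union bound and Borel--Cantelli in $n$ give the claim; because the endpoints in the proposition lie in $\ZZ^d$ there is no need to interpolate between integer values of $|\x|$, so one already obtains $\sup\{\,|T(\0,\x)-\mu(\x)|/|\x|:\x\ge\0,\ |\x|\ge N\,\}\to0$ almost surely.

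The third step upgrades $\mu(\x)/|\x|\to g(\x/|\x|)$ from rational directions to a limit uniform over $\x\ge\0$ as $|\x|\to\oo$. Here one uses that $\mu$ is not only superadditive but also, by splitting an optimal path at an intermediate level and applying concentration to the two pieces, approximately subadditive with a quantitative error controlled by \eqref{shape proposition condition}; this yields a rate in $\mu(n\x)/n\to g(\x)$, and combined with the continuity of $g$ and a finite $\eps$-net of rational directions in the simplex $\{\x\ge\0:|\x|=1\}$ it gives the uniform statement. In particular $\EE[T(\0,\x)-g(\x)]/|\x|\to0$, the first assertion---already contained in the $L^1$ half of Kingman's theorem taken at a rational direction near $\x/|\x|$.

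Finally, $|T(\0,\x)-g(\x)|\le|T(\0,\x)-\mu(\x)|+|\mu(\x)-g(\x)|$; dividing by $|\x|$ and taking the supremum over $|\x|\ge N$, the second and third steps give the result. I expect the main obstacle to be the third step, together with the reliance on concentration in the second: because $T$ fails to be monotone in its endpoint once weights may be negative, the usual sandwich argument for shape theorems is unavailable, so one must instead extract a rate from the approximate subadditivity of $\mu$ and pay for a union bound over the $\O(n^{d-1})$ endpoints at each level---and it is these two points that require the full hypothesis \eqref{shape proposition condition} rather than merely a finite first moment.
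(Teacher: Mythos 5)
The paper does not prove this proposition; it quotes it directly from Martin \cite[Theorem~5.1]{MartinLimitingShapeDP}, so there is no ``paper's proof'' to compare against. Your sketch does capture the overall architecture that Martin uses---Kingman on rational rays, concentration plus a union bound over the $\O(n^{d-1})$ lattice points at each level, then a net of rational directions---but two of your steps are looser than they appear and one is doing almost all of the work.

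First, the ``approximate subadditivity of $\mu$'' you invoke in Step~3 is neither needed nor easy to justify. The comparison between $\mu(\x):=\EE T(\0,\x)$ and $g$ is in fact asymmetric and simpler than you suggest. Superadditivity of $T$ gives $\mu(n\x)\ge n\mu(\x)$, so by Fekete $g(\x)=\sup_n\mu(n\x)/n\ge\mu(\x)$; the inequality $\mu\le g$ is exact and needs no rate at all. For the reverse inequality, you should choose, for a given $\x$ with $|\x|=n$, a rational direction $\hat{\y}$ from a finite $\eps$-net with the extra property that $n\hat{\y}\le\x$ and $|\x-n\hat{\y}|\le C\eps n$ (push $\hat{\y}$ slightly towards the interior). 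Then superadditivity and $\mu(\z)\ge -\EE|\om|\cdot|\z|$ give $\mu(\x)\ge\mu(n\hat{\y})+\mu(\x-n\hat{\y})\ge (g(\hat{\y})-\eps)n - C\eps n\,\EE|\om|$ once $n$ is large, and continuity of $g$ finishes. This one-sided Lipschitz argument is the correct substitute for the endpoint-monotonicity that is lost with signed weights; ``approximate subadditivity of $\mu$'' as a quantitative statement does not obviously hold and is the wrong tool.

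Second, and more seriously, your Step~2 appeal to ``Talagrand-type concentration after truncating the weights gives a tail decaying faster than any power of $n$'' under the bare hypothesis \eqref{shape proposition condition} is the real content and cannot be left as a parenthetical. Condition \eqref{shape proposition condition} is essentially a $d$-th moment condition, far weaker than the sub-Gaussian/sub-exponential hypotheses under which concentration inequalities apply directly. The whole point of that integral condition is that it is calibrated so that, after truncating at a level $M_n\to\oo$, the truncation error accumulated by a directed path of $n$ vertices chosen out of $\Theta(n^d)$ candidates is $\o(n)$ almost surely, which is exactly the content of \cite[Theorem~4.1]{MartinSurvey}. In a complete proof this estimate, together with a McDiarmid/Talagrand bound for the truncated weights, is where the hypothesis is actually used; your sketch treats it as a footnote, but it is the central estimate and the step where the argument would break for heavier-tailed weights.
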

The theory of concentration is also central to the arguments of \cite{BR,BKS}. In \cite{BKS}, Talagrand's concentration inequality for Bernoulli random variables 
is used to demonstrate the sublinear variance property, and also to show that first-passage times are concentrated about their average values. 
The arguments in \cite{BKS} are remarkably concise. A simplifying feature in the study of concentration for Bernoulli random variables is the concept of influence. When $\PP$ is a Bernoulli product measure, the influence of coordinate $\om(i)$ on function $f:\Om\to\RR$ is defined to be 
\be\label{inf_discreet}
I_i(f)=\PP(\De_i f\not=0)=\PP(f(\om) \text{ depends on } \om(i)).
\ee 
In \cite{BR} a more sophisticated concentration inequality \cite[Proposition 2.1]{BR} is used to extend the results of \cite{BKS} to a range of different vertex weight distributions, and also to improve the first-passage-times concentration result.

Concentration plays an even more crucial role in this paper, allowing us to deal with 
the additional complications arising in a directed space. 
In order to proof Theorem \ref{variance_thm}, we will strengthen Proposition \ref{shape proposition} in the Gaussian case, showing that the last-passage times are tightly concentrated about their mean values.
\begin{lem}\label{conc_d}
Consider Gaussian vertex weights.
For $\x\in\zzd$ and $t>0$, 
\be
\label{mean_conc}
\PP\Big(
\big|T(\0,\x)-\EE T(\0,\x)\big|
\ge t\sqrt{|\x|}\Big)
=\exp(-\Om(t^2)).  
\ee
\end{lem}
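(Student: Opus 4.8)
The plan is to apply the Gaussian concentration inequality (for Lipschitz functions of standard normal vectors) to the last-passage time, which we first truncate to a finite box. Recall that $T(\0,\x)$ depends only on the weights $\om(\v)$ for $\v$ in the box $B_\x = \{\v : \0\le\v\le\x\}$, which contains $n := \prod_i(x_i+1)$ vertices. Viewing $T(\0,\x)$ as a function $F:\RR^n\to\RR$ of the weight vector, the key point is to bound the Euclidean gradient of $F$.

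First I would establish that $F$ is $\sqrt{|\x|}$-Lipschitz in the $\ell^2$ norm. The function $F(\om)=\sup_\gamma\sum_{\v\in\gamma}\om(\v)$ is a supremum of linear functions, one for each directed path $\gamma$ from $\0$ to $\x$; each such linear function has gradient equal to the indicator vector $\mathbf 1_\gamma$ of the vertex set of $\gamma$ (excluding the origin), which has exactly $|\x|$ nonzero entries and hence $\ell^2$ norm $\sqrt{|\x|}$. A supremum of $L$-Lipschitz functions is $L$-Lipschitz, so $|F(\om)-F(\om')|\le\sqrt{|\x|}\,\|\om-\om'\|_2$ for all $\om,\om'\in\RR^n$. (One subtlety: when the distribution has unbounded support one should first check $\EE|T(\0,\x)|<\oo$, which follows from Proposition~\ref{shape proposition}, or directly by bounding $T$ by the sum of positive parts of the weights in the box.)

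Next I would invoke the classical Gaussian concentration inequality: if $Z$ is a standard normal vector in $\RR^n$ and $F$ is $L$-Lipschitz with respect to the Euclidean norm, then
\[
\PP\big(|F(Z)-\EE F(Z)|\ge s\big)\le 2\exp\!\big(-s^2/(2L^2)\big).
\]
Applying this with $L=\sqrt{|\x|}$ and $s=t\sqrt{|\x|}$ gives
\[
\PP\big(|T(\0,\x)-\EE T(\0,\x)|\ge t\sqrt{|\x|}\big)\le 2\exp(-t^2/2)=\exp(-\Om(t^2)),
\]
which is exactly \eqref{mean_conc}.

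**The main obstacle** is essentially bookkeeping rather than conceptual: one must be careful that the ambient dimension $n$ of the Gaussian vector plays no role in the final bound — and indeed it does not, since the Lipschitz constant $\sqrt{|\x|}$ is governed by the path length, not by $n$. The only genuinely delicate point is the integrability of $T(\0,\x)$ needed to make $\EE T(\0,\x)$ well-defined and to justify applying the concentration inequality; for Gaussian weights this is immediate. (For heavier-tailed distributions considered later in the paper this is where an extra hypothesis such as \eqref{shape proposition condition} enters, and the clean Euclidean-Lipschitz argument must be replaced by the more sophisticated concentration inequality of Bena\"im and Rossignol.)
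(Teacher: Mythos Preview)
Your proof is correct and is in fact the standard, more elementary argument for this lemma: $T(\0,\x)$ is a maximum of finitely many linear functionals $\om\mapsto\sum_{\v\in\gamma}\om(\v)$, each with Euclidean gradient of norm $\sqrt{|\x|}$, so $T(\0,\x)$ is $\sqrt{|\x|}$-Lipschitz and the Borell--Tsirelson--Ibragimov--Sudakov inequality gives \eqref{mean_conc} immediately.

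The paper, however, takes a deliberately different route. It derives \eqref{mean_conc} from Lemma~\ref{conc_lem} (which in turn rests on the Bena\"im--Rossignol modified Poincar\'e inequality) by slicing the distribution of $T(\0,\x)$ dyadically: for each level $u$ it applies Lemma~\ref{conc_lem} with $A=h(u)$, $B=h(2u)$ (where $h$ is the quantile function), observes that on this slice every influence is at most $u$, deduces $\var[T\wedge B\vee A]\le 4u|\x|/\log(2c_G^{-2}u^{-1})$, and then telescopes to obtain sub-Gaussian tails around the median. This is considerably more laborious than your argument. The payoff is that the method does not rely on the special Lipschitz/Gaussian structure: it uses only the influence framework and the general inequality \eqref{normal_conc4}, so the same template transfers to the uniform and gamma weight distributions treated in Section~\ref{other}, where your Euclidean-Lipschitz argument is unavailable. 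In short, your proof is the cleaner one for the Gaussian case in isolation; the paper's proof is chosen for its portability across weight distributions and for consistency with the machinery already needed for Theorem~\ref{variance_thm}.
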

The proof of Lemma \ref{conc_d} is given at the end of this section.
We will first extend the concept of influence from Bernoulli to Gaussian random variables. 
This allows the proofs of Theorem \ref{variance_thm} and Lemma \ref{conc_d} to resemble closely the corresponding results in \cite{BKS}. 

Let $\PP$ denote a product measure that support Gaussian vertex weights.
Let $H_1^2$ denote the log Sobolev space corresponding to $\PP$ \cite{BR}. 
Define the influence of $\om(\v)$ on a function $f\in H_1^2$ to be
\[
\ivf=\PP\big(\pfo\not=0\big), \qq\v\in\zzd.
\]
With $\x\in\zzd_+$, consider $f(\om)=T(\0,\x)$. 
The influences for $f$ have a geometric interpretation. The vertex weight distribution is continuous, so the path $\gamma$ from $\0$ to $\x$ corresponding with $f$ is unique. If a vertex $\v$ lies in $\gamma$, then $\pfo=1$, otherwise $\pfo=0$. Therefore $\ivf$ is the probability that $\v\in\gamma$.
\begin{lem}\label{conc_lem}
Let $(\om_s:s\in S)$ denote auxiliary Bernoulli($1/2$) random variables.
Let $\Z:\Om\to\zzd$ denote a random variable that only depends on the auxiliary variables, so that $\Z$ is independent of the vertex weights. 
With $-\oo\le A<B\le \oo$, let
\[
f(\om)=T(\Z,\x+\Z)\wedge B \vee A, \qq I_S(f)=\sum_{s\in S} \nt{\De_s f}^2.
\]
and let $u=\PP(A<T(\0,\x)<B)$. There is a constant $c_G$ such that 
\begin{align}
\label{normal_conc4}
\var[f]\log\frac{\var[f]}{c_G^2 u |\x| \max_\v \ivf + I_S(f)} \le 2 u |\x| +2I_S(f).
\end{align}
\end{lem}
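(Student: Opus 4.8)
The plan is to apply the Bena\"im--Rossignol concentration inequality (the modified log-Sobolev inequality from \cite[Proposition 2.1]{BR}) to the function $f(\om)=T(\Z,\x+\Z)\wedge B\vee A$, treating the Gaussian vertex weights and the auxiliary Bernoulli variables $(\om_s:s\in S)$ on an equal footing as coordinates of the product space. The left-hand side of \eqref{normal_conc4} is exactly what the Bena\"im--Rossignol inequality produces: it bounds $\var[f]\log(\var[f]/E)$ by (a constant times) a Dirichlet-type energy, where $E$ is a weighted sum over coordinates of $\ivf\cdot\nt{\De_\v f}^2$-type quantities (and the analogous Bernoulli contributions). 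So the real content is (i) controlling the energy term $\sum_\v \nt{\De_\v f}^2 + \sum_{s}\nt{\De_s f}^2$ from above by $2u|\x|+2I_S(f)$, and (ii) controlling the normalizing quantity $E$ from below (equivalently, bounding it above inside the logarithm) by $c_G^2 u|\x|\max_\v\ivf + I_S(f)$.

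For step (i): the key is a Lipschitz/efficiency estimate for $T$. Changing a single Gaussian weight $\om(\v)$ changes $T(\Z,\x+\Z)$ by at most $|\De\om(\v)|$ if $\v$ lies on the (pre- or post-perturbation) maximal path, and not at all otherwise; after truncation by $\wedge B\vee A$ the same holds for $f$. This gives $\nt{\De_\v f}^2 \le \EE[\mathbf 1\{\v\in\gamma\}\,(\text{something like }\var(\om(\v)))] $, and summing over $\v$, since any directed path from $\Z$ to $\x+\Z$ meets exactly $|\x|$ vertices, one gets $\sum_\v \nt{\De_\v f}^2 \le c\,u|\x|$ — the factor $u=\PP(A<T(\0,\x)<B)$ enters because on the event that $T$ is already outside $(A,B)$ the truncated function $f$ is locally constant and the weights have no influence. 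Being a little careful with the Gaussian tails (the perturbation is not bounded, so one integrates $\EE[(\De\om(\v))^2]$ against the indicator) yields the clean bound with a Gaussian constant. The Bernoulli part contributes $\sum_s\nt{\De_s f}^2 = I_S(f)$ by definition, and combining gives the right-hand side $2u|\x|+2I_S(f)$ (the factor $2$ absorbs the constants from \cite{BR}).

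For step (ii), the denominator: here one uses that each Gaussian coordinate's contribution to $E$ is bounded by $\ivf\cdot(\text{sup norm of }\De_\v f)^2$ or by $\ivf$ times the local Lipschitz bound, and crucially $\sum_\v \ivf \le u|\x|$ again (it is $u$ times the expected number of path-vertices, which is $|\x|$), so factoring out $\max_\v\ivf$ gives $\sum_\v(\cdots)\le c_G^2 u|\x|\max_\v\ivf$; the Bernoulli coordinates contribute at most $I_S(f)$. One then uses monotonicity of $x\mapsto x\log(x/E)$ to replace $E$ by the larger, cleaner expression $c_G^2 u|\x|\max_\v\ivf + I_S(f)$ inside the logarithm.

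The main obstacle I expect is step (i) done honestly in the Gaussian (unbounded-weight) setting: unlike the Bernoulli case in \cite{BKS}, a single weight can in principle be changed by an arbitrarily large amount, so one cannot simply say "flipping a coordinate changes $T$ by $\O(1)$." One must instead argue at the level of $L^2$ norms of $\De_\v f$, using that $\De_\v f$ measures the fluctuation of $f$ as $\om(\v)$ is resampled, bounding $|f(\om)-f(\om')|\le |\om(\v)-\om'(\v)|\,\mathbf 1\{\v\in\gamma(\om)\text{ or }\v\in\gamma(\om')\}$, and then integrating against the Gaussian density of the resampled coordinate. Keeping track of the indicator (which couples to the path and hence to whether $T\in(A,B)$) through this integration — and verifying the event $\{\v\in\gamma\}$ can be replaced, up to the constant $c_G$, by the influence $\ivf$ for the \emph{un}truncated $T(\0,\x)$ so that translation-invariance lets us drop the $\Z$-shift — is the delicate bookkeeping that makes the statement come out in exactly the stated form.
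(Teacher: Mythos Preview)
Your plan has the right skeleton---apply the Bena\"im--Rossignol inequality and exploit that the maximal path has exactly $|\x|$ vertices---but you have misread the structure of the inequality, and as a result you locate the difficulty in the wrong place and miss the actual key step.

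Corollary 2.2 of \cite{BR} reads, for Gaussian coordinates $\v$ and Bernoulli coordinates $s$,
\[
\var[f]\,\log\frac{\var[f]}{\sum_\v \no{\De_\v f}^2+\sum_s \no{\De_s f}^2}\;\le\;2\sum_\v \nt{\pfo}^2+2\sum_s \nt{\De_s f}^2.
\]
The right-hand side involves the \emph{partial derivative} $\pfo$, not $\De_\v f$. Since $\pfo$ equals the indicator of $\{\v\in\gamma,\ A<T<B\}$, one has $\nt{\pfo}^2=\ivf$ exactly, and $\sum_\v\ivf=u|\x|$ immediately. So your ``step (i)''---the part you flag as the main obstacle, with the Lipschitz resampling bound $|f(\om)-f(\om')|\le|\om(\v)-\om'(\v)|\,\mathbf 1\{\v\in\gamma\}$---is not needed at all for the right-hand side.

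The real work is in the denominator, and your ``step (ii)'' argument does not close. You propose to bound each Gaussian contribution to the denominator by $\ivf\cdot(\text{constant})$; but $\sum_\v \ivf\cdot C=C\,u|\x|$, with no factor of $\max_\v\ivf$. To obtain $c_G^2\,u|\x|\max_\v\ivf$ you need each term bounded by $c_G^2\,\ivf^{\,2}$, i.e.\ you need $\no{\De_\v f}\le c_G\,\ivf$. This is the genuine content of the lemma. The paper gets it by observing that, conditional on $\om^{-\v}$, the function $\om(\v)\mapsto f$ has the form $\om(\v)\wedge b\vee a$ (plus a constant) for some random $a<b$, because $T$ is a maximum of affine functions of $\om(\v)$ and is then truncated. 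One is reduced to showing
\[
c_G:=\sup_{-\infty\le a<b\le\infty}\frac{\EE|X\wedge b\vee a-\EE(X\wedge b\vee a)|}{\PP(a<X<b)}<\infty
\]
for $X$ standard normal, which is a short but honest Gaussian-tail case analysis. Your ``sup norm of $\De_\v f$'' route cannot work (that sup norm is infinite), and the Lipschitz bound only gives an $L^1$ estimate proportional to $\EE[|X|\,\mathbf 1_{a<X<b}]$, not to $\PP(a<X<b)$; the ratio of these blows up as $a\to\infty$ unless you do the careful comparison above.
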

\noindent When $(A,B)=(-\oo,\oo)$, $f$ is the last-passage time from $\Z$ to $\x+\Z$. By translation invariance, $T(\Z,\x+\Z)$ has the same distribution as $T(\0,\x)$; the reason for allowing the starting point to be randomized is to reduce the maximum vertex influence $\max_{\v} \ivf$.

To see how inequality \eqref{normal_conc4} places an upper bound on $\var[f]$, note that if $b>a>0$ and $x>0$, 
\be\label{lambert}
x\log \frac{x}{a}\le b \implies x\le \frac{2b}{\log (b/a)}.
\ee
\begin{proof}[Proof of Lemma \ref{conc_lem}]
Corollary 2.2 \cite{BR} states that for $f\in H_1^2$,
\begin{align*}
\var[f]\log
\frac{\var[f]}
{
\sum_{\v\in \zzd} \no{\De_\v f}^2+
\sum_{s\in S} \no{\De_s f}^2
}
\le
2\sum_{\v\in \zzd} \nt{\pfo}^2+
2\sum_{s\in S} \nt{\De_s f}^2.
\end{align*}
The maximal path $\gamma$ corresponding to $T(\Z,\x+\Z)$ is almost surely unique. For a point $\v\in\zzd$, if $\v\in\gamma$ and $A<T(\Z,\x+\Z)<B$ then $\pfo=1$, otherwise $\pfo=0$; hence
\[
\sum_\v \nt{\pfo}^2=\sum_\v \ivf= u|\x| \qtext{and} \sum_\v \ivf^2 \le u|\x|\max_\v \ivf.
\]
To derive inequality \eqref{normal_conc4} from the above, we must show that for some constant $c_G$,
\be\label{cg1}
\forall \v\in\zzd,\qq \no{\De_\v f}\le c_G \ivf.
\ee
For $\om\in\Om$, let $\om^{-\v}=(\om(i):i\in I\sm\{\v\})$. Conditional on $\om^{-\v}$, there is an interval $(a,b)$ such that $\pfo=1$ if $a<\om(\v)<b$, and $\pfo=0$ if $\om(\v)<a$ or $\om(\v)>b$. Suppose that
\be\label{cg2}
\om^{-\v}\text{--a.s},\qq \EE_\v|\De_\v(\om(\v)\wedge b\vee a)|\le c_G\, \PP_\v(a<\om(\v)<b);
\ee
inequality \eqref{cg1} follows by integrating over $\om^{-\v}$. To check inequality \eqref{cg2}, let $X=\om(\0)$, so that $X$ represents a typical vertex weight.
We must check that $c_G<\oo$, where
\[
c_G:= \sup_{-\oo\le a<b\le \oo}\frac{\EE|X\wedge b \vee a - \EE(X\wedge b \vee a)|}{\PP(a<X<b)}.
\]
By symmetry, we can assume $-b\le a < b$. It is enough to consider the cases
\begin{align*}
\text{(i)}&\ 1<a<b< a+a^{-2},\qq& 
\text{(ii)}&\  1<a<a+a^{-2}\le b, \\
\text{(iii)}&\ |a|\le 1,\ b-a<1,\qq& 
\text{(iv)}&\ a\le 1,\ b-a\ge 1.
\end{align*}
Let $\phi(x)=\exp(-x^2/2)/\sqrt{2\pi}$. In cases (i) and (ii), by the triangle inequality and Jensen inequality,
\begin{align*}
\EE|X\wedge b \vee a -\EE(X\wedge b \vee a)|&\le \EE|X\wedge b \vee a - a-\EE(X\wedge b \vee a-a)|\\&\le 2\EE|X\wedge b \vee a-a|.
\end{align*}
In case (i), $\EE|X\wedge b \vee a-a|\le (b-a)\PP(X>a)$ and $\PP(X>a)\le \phi(a)$ so
\[
\frac{\EE|X\wedge b \vee a - \EE(X\wedge b \vee a)|}{\PP(a<X<b)}\le \frac{2(b-a)\phi(a)}{\PP(a<X<b)}\le
 \frac{2\phi(a)}{\phi(a+a^{-2})}\le
 2e^{3/2}.
\]
In case (ii), $\EE|X\wedge b \vee a-a|\le \phi(a)/a^2$ so
\begin{align*}
\frac{\EE|X\wedge b \vee a-\EE(X\wedge b \vee a)|}{\PP(a<X<b)}\le \frac{2\phi(a)/a^2}{\phi(a+a^{-2})/a^2}\le 2e^{3/2}.
\end{align*}
Cases (iii) and (iv) are simpler so we omit the details. 
\end{proof}
\begin{proof}[Proof of Lemma \ref{conc_d}]
The result is obtained by applying Lemma \ref{conc_lem} iteratively to the tails of the distribution of $T(\0,\x)$. 
Let $h$ denote the inverse cumulative distribution function for $T(\0,\x)$,
\begin{align*}
\PP[T(\0,\x)<h(u)]=u,\qquad u\in (0,1).
\end{align*}
Apply Lemma \ref{conc_lem} to $f(\om)=T(\0,\x)\wedge B \vee A$ with $A=h(u)$ and $B=h(2u)$. For any $\v$, $\pfo=1$ implies $A<T(\x,\y)<B$; hence $\max_\v\ivf\le u$. By \eqref{lambert},
\[
\var[f] \le 4u|\x|/\log(2c_G^{-2}u^{-1}).
\]
By Chebyshev's inequality, $h(2u)-h(u)=\sqrt{|\x|}/\Om(\sqrt{\log 1/u})$. By a telescopic sum, $h(2^{-1})-h(2^{-n})=\sqrt{|\x|}\,\O(\sqrt{n})$. In the same way, $h(1-2^{-n}) - h(1-2^{-1})=\sqrt{|\x|}\,\O(\sqrt{n})$. Hence 
\[
\PP\Big( \big| T(0,\x)-h(1/2)\big|\ge t\sqrt{|\x|} \Big)=\exp(-\Om(t^2)).
\]
This implies that $|h(1/2)-\EE T(0,\x)|=\O(\sqrt{|\x|})$ and \eqref{mean_conc} follows.
\end{proof}

\section{Concavity of $g$\label{sec:not_flat}}
Assume that the vertex weight distribution has a finite mean, 
so that the asymptotic last-passage time function $g$, defined in \eqref{lim_def}, is homogeneous of degree 1 and concave.
Last-passage percolation is symmetric with respect to permutations of $(\e_i:i=1,\dots,d)$, so 
\be\label{con_eq1}
g(\tfrac12\e_1+\tfrac12\e_2)\ge
\tfrac12 g(\e_1)+\tfrac12 g(\e_2)=
g(\e_2).
\ee
If we knew that $g$ was strictly concave, then it would follow immediately that inequality \eqref{con_eq1} was strict. However, it is an open problem to determine when $g$ is strictly concave.
\begin{lem}\label{con_lem1}
Inequality \eqref{con_eq1} is strict if the vertex weights are random.
\end{lem}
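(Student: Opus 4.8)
The plan is to show that the max-plus subadditivity underlying the limit in \eqref{lim_def} is \emph{strictly} improved when one splits a long diagonal path into two halves, then to transfer that gap to the limit $g$. Concretely, I would work with $T(\0,2N\u_2)$ where $\u_2=\e_1+\e_2$, compare it with $T(\0,N\u_2)+T(N\u_2,2N\u_2)$, and exploit the extra freedom a single path has over a concatenation of two independent halves.

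First I would set up the comparison. For any $\x\le\y\le\z$ we have the superadditivity $T(\x,\z)\ge T(\x,\y)+T(\y,\z)$, and in particular
\[
T(\0,2N\u_2)\ge \max_{\w:\,\0\le\w\le 2N\u_2,\ |\w|=2N}\big(T(\0,\w)+T(\w,2N\u_2)\big),
\]
the maximum running over the ``anti-diagonal'' of midpoints $\w=k\e_1+(2N-k)\e_2$. Taking expectations and using that each $T(\0,\w)$ and $T(\w,2N\u_2)$ is distributed as a last-passage time across a rectangle, the right-hand side is at least $\EE T(\0,N\u_2)+\EE T(N\u_2,2N\u_2)=2\,\EE T(\0,N\u_2)$; dividing by $2N$ and letting $N\to\oo$ recovers \eqref{con_eq1}. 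To get \emph{strict} inequality I would instead bound the expected maximum over midpoints $\w$ from below by a single term plus a genuine surplus, using the second moment of the family $\{T(\0,\w)+T(\w,2N\u_2)\}_\w$ and the fact that the vertex weights are non-degenerate.

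The key step is producing that surplus. I would restrict to a small window of midpoints near the center, $\w=N\u_2+j(\e_1-\e_2)$ for $|j|\le \l\sqrt N$ say, and observe that (i) each of these roughly $2\l\sqrt N$ random variables has the same mean $2\,\EE T(\0,N\u_2)$ by reflection symmetry and translation invariance, and (ii) they are not perfectly correlated, because changing a handful of weights near the center alters which midpoint is optimal — this is exactly where ``the weights are random'' enters, via Lemma~\ref{conc_d}, which pins each $T(\0,N\u_2)$ to scale $\sqrt N$ with Gaussian-type tails and hence forces a positive variance of order $N$ for each term and a non-trivial spread across the window. The maximum of $m$ random variables with common mean $\mu$, variance of order $v$, and bounded pairwise correlation below $1$ exceeds $\mu+c\sqrt{v}$ for a constant $c=c(m)>0$; applying this with $m\asymp\sqrt N$, $v\asymp N$ gives
\[
\EE T(\0,2N\u_2)\ge 2\,\EE T(\0,N\u_2)+\Om(\sqrt N).
\]
Dividing by $2N$, the error term vanishes in the limit, so this alone is \emph{not} yet enough — one needs the surplus to be linear in $N$, not $O(\sqrt N)$.

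To upgrade $\Om(\sqrt N)$ to $\Om(N)$ I would iterate: the same comparison applied at every dyadic scale $2^k$ up to $N=2^K$ contributes a surplus $\Om(\sqrt{2^k})$ at scale $k$, and summing the geometric-type series $\sum_{k\le K}2^{k/2}\asymp 2^{K/2}=\sqrt N$ — still only $\sqrt N$. So instead I would run the window argument at a \emph{fixed} small scale $n_0$, where the one-step gap $\EE T(\0,2n_0\u_2)-2\,\EE T(\0,n_0\u_2)=:\d>0$ is a strictly positive constant (again because the weights are random, so $T(\0,2n_0\u_2)$ is genuinely a max over several distinct paths, not a single forced one), and then telescope multiplicatively over the $\log_2(N/n_0)$ doublings from $n_0$ to $N$: superadditivity gives $\EE T(\0,2^{k+1}n_0\u_2)\ge 2\,\EE T(\0,2^k n_0\u_2)+\d\cdot 2^k$ by subdividing into $2^k$ blocks of side $2n_0$ and applying the one-step gap in each, so $\EE T(\0,N\u_2)\ge (N/n_0)\,\EE T(\0,n_0\u_2)+\Om(N)$. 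Dividing by $N$ and letting $N\to\oo$ yields $g(\u_2)\ge \tfrac{1}{n_0}\EE T(\0,n_0\u_2)+\Om(1)>2g(\e_2)$, which is \eqref{con_eq1} strict. The main obstacle is the very first inequality: verifying that the one-step gap $\d$ is strictly positive, i.e.\ that over a fixed finite rectangle the expected last-passage time is \emph{strictly} larger than the best one can do by cutting it in half — this requires exhibiting, with positive probability, a weight configuration in which the optimal full-length path through the rectangle does not pass through the midpoint of the cut, which holds precisely because a non-degenerate i.i.d.\ field gives positive probability to the off-center path beating every on-center concatenation by a fixed amount.
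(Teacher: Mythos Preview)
Your proposal is aimed at the wrong target. The lemma compares two \emph{different} directions --- the diagonal $\tfrac12\e_1+\tfrac12\e_2$ and the axis $\e_2$ --- yet your argument works entirely along the diagonal, trying to extract a linear-in-$N$ surplus from the superadditivity of $T$ in the direction $\e_1+\e_2$. At the very end you assert
\[
g(\e_1+\e_2)\ \ge\ \tfrac1{n_0}\,\EE T(\0,n_0(\e_1+\e_2))+\Om(1)\ >\ 2g(\e_2),
\]
but nothing in what precedes it mentions $g(\e_2)$; the final ``$>2g(\e_2)$'' is simply unjustified. To fill that gap you would need $\tfrac1{n_0}\EE T(\0,n_0(\e_1+\e_2))\ge 2g(\e_2)$, and the moment you observe that $g(\e_2)=\EE\,\om(\0)$ (there is a unique directed path along an axis) and that any single fixed path already gives $\EE T(\0,n_0(\e_1+\e_2))\ge 2n_0\,\EE\,\om(\0)$, the entire apparatus of midpoint windows, second moments and dyadic telescoping becomes superfluous: take $n_0=1$ and note $\EE T(\0,\e_1+\e_2)=\EE\max(\om(\e_1),\om(\e_2))+\EE\,\om(\0)>2\,\EE\,\om(\0)$ directly, since the weights are non-degenerate.

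That is precisely the paper's proof. It compares $\EE T(\0,\e_1+\e_2)$ with $\EE T(\0,2\e_2)$ at the smallest scale, where strict inequality is immediate (two paths versus one), concatenates $N$ diagonal steps, uses that the axis direction is \emph{exactly} additive, and passes to the limit. Two further defects in your draft: you invoke Lemma~\ref{conc_d}, which is specific to Gaussian weights, inside a statement made for arbitrary non-degenerate weights; and your recursion $\EE T(\0,2^{k+1}n_0(\e_1+\e_2))\ge 2\,\EE T(\0,2^kn_0(\e_1+\e_2))+2^k\d$ is written the wrong way round --- decomposing into $2^k$ blocks of side $2n_0$ bounds the left side below by $2^{k+1}\EE T(\0,n_0(\e_1+\e_2))+2^k\d$, which by superadditivity is \emph{at most}, not at least, $2\,\EE T(\0,2^kn_0(\e_1+\e_2))+2^k\d$.
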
 
\begin{proof}
As there are two directed paths from $\0$ to $\e_1+\e_2$, but only one directed path from $0$ to $2\e_2$, 
\[
\EE[T(\0,\e_1+\e_2)] > \EE [T(\0,2\e_2)] .
\]
Let $N\in\ZZ_+$. 
One can construct a directed path from $\0$ to $2N(\e_1+\e_2)$ by joining $N$ paths with displacement $\e_1+\e_2$, so
\begin{align*}
\EE [T(\0,N(\e_1+\e_2))] \ge N \EE [T(\0,\e_1+\e_2)] > N\EE [T(\0,2\e_2)] = \EE [T(\0,2N\e_2)].
\end{align*}
Divide by $2N$ and let $N\to\oo$.
\end{proof}
Again by symmetry, if $d>2$ and $x_1=x_2=0$,
\[
g(\tfrac12\e_1+\tfrac12\e_2+\x) \ge g(\e_2+\x).
\]
Even with random vertex weights, this inequality is not necessarily strict---consider Bernoulli vertex weights with density $p$ \cite{MartinSurvey}. When $p$ is sufficiently close to $1$, the process is supercritical in the sense of ordinary directed site percolation. In that case, $g(\x)$ reaches a plateau of $1$ as a function on the simplex $\{\x\in\RR_+^d:|\x|=1\}$.

The behavior in the Bernoulli case seems to be the exception rather than the rule. The Bernoulli distribution is `unusual' in that it places a positive amount of mass on a maximum value. For contrast, consider the geometric distribution, which is unbounded, and the uniform $[0,1]$ distribution, which is bounded but places zero mass at 1. 
\begin{lem}\label{not_flat}
Suppose the vertex weight distribution does not place mass on a maximum value, and that \eqref{shape proposition condition} is satisfied. There exists $c>0$ such that
\[
g(\tfrac12\e_1+\tfrac12\e_2+\x) \ge g(\e_2+\x)+c,\qq \forall\,\x\in\{0\}^2\times [0,1]^{d-2}.
\]
\end{lem}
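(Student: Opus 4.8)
The plan is to reduce the uniform statement to a pointwise strict inequality and then to a non-flatness property of $g$, which is where the hypothesis on the weight distribution is used. Since \eqref{shape proposition condition} holds, $g$ is finite and continuous on $\RR_+^d$, so the function $\x\mapsto g(\tfrac12\e_1+\tfrac12\e_2+\x)-g(\e_2+\x)$ is continuous on the compact slab $\{0\}^2\times[0,1]^{d-2}$. Hence it is enough to prove that this difference is strictly positive at each $\x$ in the slab, and then take $c$ to be its minimum.

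Fix such an $\x$ and set $h(t):=g(t\e_1+(1-t)\e_2+\x)$ for $t\in[0,1]$. This is concave, being the restriction of the concave function $g$ to a line segment, and the $\e_1\leftrightarrow\e_2$ symmetry of last-passage percolation, which fixes $\x$, gives $h(0)=h(1)$. A concave function with equal endpoints satisfies $h(\tfrac12)\ge h(0)$, with equality exactly when $h$ is constant on $[0,1]$. So the desired pointwise inequality is equivalent to the assertion that $g$ is not affine on the segment $[\e_2+\x,\e_1+\x]$; by homogeneity, equivalently, that $g$ is not linear on the two-dimensional cone $C:=\{a(\e_1+\x)+b(\e_2+\x):a,b\ge 0\}$, on which $g$ would otherwise take the value $g(a(\e_1+\x)+b(\e_2+\x))=(a+b)\,g(\e_2+\x)$.

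The remaining, and hardest, step is to rule out such flatness, and this is where I would use that the weight distribution has no atom at its essential supremum $M$ (with $M=\oo$ allowed). I would argue by a renormalization: supposing $g$ is linear on $C$, pass to a coarse-grained directed last-passage model on $\ZZ^2$ whose macro-step of type $i$ ($i=1,2$) is a displacement $\approx p(\e_i+\x)$ carrying as weight the last-passage value of the original model across the corresponding block. The two macro-weight laws agree by the $\e_1\leftrightarrow\e_2$ symmetry, and the absence of an atom at $M$ makes the common law non-degenerate, so Lemma \ref{con_lem1} applied to this coarse model yields a strict gain along the coarse diagonal over the coarse $\e_2$-direction; meanwhile the assumed linearity of $g$ on $C$, together with Proposition \ref{shape proposition} comparing the coarse and fine models, forces $g^{\mathrm{coarse}}$ to be essentially linear in exactly those directions, giving the contradiction. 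The main obstacle will be to make this quantitative: the gain produced by Lemma \ref{con_lem1} at the coarse scale must dominate the $\o(\cdot)$ errors of the shape theorem, which is delicate because the macro-weights concentrate as $p\to\oo$, so a single macro-step gives too little. To close the argument one probably has to exploit the full coarse last-passage model, or the directed-percolation consequence of there being no atom at $M$, namely that near-maximal vertices are too sparse to carry a positive density of an optimal path, so that $g(\v)<M|\v|$ strictly, ruling out the flat case outright.
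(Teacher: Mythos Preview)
Your reduction via compactness and the concavity/symmetry observation are correct, but the core step---ruling out linearity of $g$ on the cone $C$---is not carried out, and the obstacle you identify is real and fatal to the sketch as it stands. If you coarse-grain with block size $p$, the coarse weight $T(\0,p(\e_i+\x))$ has mean $p\,g(\e_i+\x)+\o(p)$ and fluctuations of order at most $\sqrt{p}$, so the strict gain Lemma~\ref{con_lem1} gives at the coarse scale is $\O(\sqrt{p})$, not $\Omega(p)$. Dividing by $p$ and sending $p\to\oo$ kills the gain, and for fixed $p$ all you get is the innocuous inequality $g(\e_2+\x)>\EE T(\0,p(\e_2+\x))/p$, which is just strict superadditivity, not a contradiction. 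Your fallback suggestions (``exploit the full coarse model'' or ``near-maximal vertices are too sparse'') are not arguments; in particular the sparsity of near-maximal vertices only gives $g(\v)<M|\v|$, which says nothing about flatness of $g$ in the interior of $\RR_+^d$.

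The paper avoids this entirely with a direct, finite-$N$ construction. Starting from the $\om$-optimal path to $\p=N\e_2+\lfloor N\x\rfloor$, it builds a second environment $\phi$ by inserting an independent auxiliary weight each time the path takes an $\e_2$-step, allowing a swap to a diagonal step $\e_1-\e_2$ whenever the auxiliary weight beats the original. The key quantitative input is a union bound over the at most $d^{\,dN}$ directed paths: with probability $1-2^{-N}$, at least half of the $\e_2$-step weights on the optimal path lie below the quantile $t_a=F^{-1}(1-1/(16d^{2d}))$. The ``no atom at the maximum'' hypothesis then furnishes $t_b>t_a$ with $F(t_b)<1$, so a positive fraction $\d$ of the auxiliary weights exceed $t_b$, yielding a gain of at least $(t_b-t_a)\d N$ along the diagonalized path. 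This produces an explicit $c>0$ uniformly in $\x$, with no compactness argument needed.
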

\begin{proof}
Let $\x\in\{0\}^2\times[0,1]^{d-2}$. 
With $N$ a positive integer, let $\p=N\e_2+\lfloor N\x\rfloor$
and let $L$ denote the line segment
\[
L=\{\p+\al(-\e_2+\e_1):\al\in[0,N]\}.
\]
We will use $\om$ to construct a second set of vertex weights: $\phi$. Let $T_\phi$ denote the last passage times under $\phi$.
In the process of constructing $\phi$, we will also construct a function $\x:\{0\}\times\zzdm_+\to\zzd_+$ such that 
$\x(\p)$ lies on the line $L$. 
The construction is designed to allow the comparison of $T(\0,\p)$ with $T_\phi(0,\x(\p))$. 
By Proposition \ref{shape proposition} and the concavity of $g$, it is enough to show that for some $c>0$, 
\[
\lim_{N\to\oo} \PP\Big(T_\phi(\0,\x(\p)) \ge T(\0,\p) + c N\Big)=1.
\]
We will now be more precise. 
Let $\v$ represent an element of $\{0\}\times\zzdm_+$. The construction will ensure that, 
\be\label{x_of_v}
\x(\v)\in\{\v+\al (-\e_2+\e_1):\al\in[0,v_2]\} \qtext{and} T_\phi(\0,\x(\v))
\ge T(\0,\v).
\ee 
We will construct $\x$ and $\phi$ inductively. Notice that the last-passage times $(T(\0,\v):\v\in\zzd_+)$ satisfy an inductive relationship,
\[
T(\0,\v)= \om(\v)+\max_{j=1,2,\dots,d}  T(\0,\v-\e_j).
\]
Let $I_k=\{\v\in \{0\}\times\zzdm_+:|\v|=k\}$. To begin the process, let $\x(\0)=\0$ and $\phi(\0)=\om(\0)$. 
Now assume inductively that for $\v\in I_{k-1}$, $\x(\v)$ and $\phi(\v)$ have been defined in accordance with \eqref{x_of_v}.
We will carry out the inductive step in three stages.

First, consider separately all $\v\in I_k$. Choose $j=j_\v\in\{2,3,\dots,d\}$ to maximize
$T(\0,\v-\e_j)$.  Let
\begin{align*}
\hat{\x}(\v)&=\x(\v-\e_j)+\e_j, \text{ and}\\ \phi(\hat{\x}(\v))&=\om(\v).
\end{align*}
Hence $T_\phi(\0,\hat{\x}(\v))\ge T(\0,\v)$.  
Second, for all $\v\in\zzd$ with $|\v|=k$, if $\phi(\v)$ is undefined after the first stage, take $\phi(\v)$ to be an auxiliary vertex weight random variable, independent of everything else.
Third, to finish off the inductive step, consider again all $\v\in I_k$. If $j_\v>2$, set $\x(\v)=\hat{\x}(\v)$. If $j_\v=2$, $\phi(\hat{\x}(\v)-\e_2+\e_1)$ is one of the auxiliary random variables; set
\[
 \x(\v)=\begin{cases} 
\hat{\x}(\v) & \text{if }
\phi(\hat{\x}(\v))>\phi(\hat{\x}(\v)-\e_2+\e_1),\\ \hat{\x}(\v)-\e_2+\e_1 & \text{if }
\phi(\hat{\x}(\v))<\phi(\hat{\x}(\v)-\e_2+\e_1).\\
\end{cases}
\]
Now $T_\phi(\0,\x(\v))\ge T_\phi(\0,\hat{\x}(\v))\ge T(\0,\v)$. By induction in $k$, \eqref{x_of_v} holds.

The $\om$-maximal path from $\0$ to $\p$ contains $N$ steps
of the form $\v-\e_2\to\v$; let $\v_i-\e_2\to\v_i$, $i=1,\dots,N$, denote the $N$ steps. Let
\[
A_i=\phi(\hat{\x}(\v_i)),\qq B_i=\phi(\hat{\x}(\v_i)-\e_2+\e_1).
\]
Note that if $\#\{i:B_i \ge A_i + \eps\}\ge \d N$, then $T_\phi(\0,\x(\p))\ge T(\0,\p)+\eps\d N$. 
Whilst the $B_i$ are typical vertex weights, independent of $\om$, the $A_i$ are vertex weights on the $\om$-maximal path from $\0$ to $\p$. 

The number of directed paths from $0$ to $\p$ is less than $d^{\,|\p|}\le d^{\,dN}$. 
Let $F$ denote the vertex weight cumulative distribution function, and let 
\[
a=1/(16d^{\,2d}),\qquad t_a=F^{-1}(1-a),\qquad t_b\in\{t>t_a:F(t)<1\}.
\]
The probability that more than half of the vertex weights associated with steps in the direction $\e_2$ along any directed path from $\0$ to $\p$ are greater than $t_a$ is less than $d^{\,dN} (2^N a^{N/2}) \le 2^{-N}$. 
Taking $0<\d<(1/2)(1-F(t_b))$,
\[
\lim_{N\to\oo}\PP\big(\#\{i:A_i \le t_a < t_b\le B_i\}\ge \d N\big)=1.\qedhere
\]
\end{proof}

\section{Proof of Theorem \ref{variance_thm}}\label{variance_section}
The proof is based on the corresponding result for first-passage
percolation with Bernoulli-type edge weights \cite{BKS}; their proof exploits the fact that first-passage times form a metric space, so $\nt{S(0,\x+\e_1)-S(\0,\x)}$ is bounded as $|\x|\to\oo$. The main difficulty in adapting the proof for the directed case is that $T$ is not a metric.
We will show that:
\begin{lem}\label{shift_lemma}
$\nt{T(-\e_1,N\u)-T(\0,N\u)}=\O(N^{1/4}\log N)$.
\end{lem}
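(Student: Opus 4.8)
The plan is to control the $L^2$ norm of the increment $T(-\e_1,N\u)-T(\0,N\u)$ by combining a crude deterministic/pathwise comparison with the Gaussian concentration estimate of Lemma~\ref{conc_d}. First observe that $T(-\e_1,N\u)$ and $T(\0,N\u)$ can be coupled through the same environment $\om$: any directed path from $\0$ to $N\u$ is also a directed path from $-\e_1$ to $N\u$ (after prepending the single step $-\e_1\to\0$, whose weight is $\om(\0)$, already counted in $T(\0,N\u)$ by our convention, or not — one must be careful here, but in any case a path from $-\e_1$ can be truncated to one from $\0$ by deleting its initial segment up to the first visit of the hyperplane $x_1=0$). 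Thus pathwise $T(-\e_1,N\u)\ge T(\0,N\u)$ possibly up to a bounded correction, and conversely $T(-\e_1,N\u)\le T(\0,N\u)+\text{(weight of the initial segment)}$, where the initial segment of the maximal path from $-\e_1$ lies in the slab $\{x_1\le 0\}$ intersected with the light cone of $N\u$, hence has length $\O(1)$ in the $x_1$ direction but could wander far in the other coordinates. So the naive pointwise bound is not $\O(1)$; this is exactly the ``$T$ is not a metric'' difficulty flagged in the text.

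The key idea is therefore to compare $T(-\e_1,N\u)$ not with $T(\0,N\u)$ directly but with $T(\z,N\u)$ for an appropriate random point $\z$ on the hyperplane $x_1=0$ — namely the point where the maximal path from $-\e_1$ first hits $x_1=0$ — and then use that $\z$ ranges over a set of at most polynomially many (in $N$) lattice points, each contributing a term $T(\z,N\u)$ whose distribution, by translation invariance and Lemma~\ref{conc_d}, is concentrated within $\O(\sqrt{N})$ of a common mean $\EE T(\0,N\u)$ with Gaussian tails $\exp(-\Om(t^2))$ on scale $t\sqrt N$. Taking a union bound over the $\O(N^{d-1})$ relevant choices of $\z$ (those within $L_1$-distance $N$ of both $-\e_1$ and $N\u$), the maximum of $|T(\z,N\u)-\EE T(\0,N\u)|$ over all such $\z$ is $\O(\sqrt{N\log N})$ with overwhelming probability, say $1-N^{-10}$. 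On this good event, $|T(-\e_1,N\u)-T(\0,N\u)|=\O(\sqrt{N\log N})$. Off the good event, one controls the (rare) contribution using the same concentration bound to show the tail of the increment is still integrable; since both $T(-\e_1,N\u)$ and $T(\0,N\u)$ have sub-Gaussian fluctuations on scale $\sqrt N$, the $L^2$ norm of their difference restricted to an event of probability $N^{-10}$ is $o(1)$. Squaring and integrating then yields $\nt{T(-\e_1,N\u)-T(\0,N\u)}^2=\O(N\log N)$, i.e.\ the claimed $\O(N^{1/4}\log N)$ — wait: $\O((N\log N)^{1/2})$; to reach the stated $\O(N^{1/4}\log N)$ one needs a sharper bound on the typical size of the increment itself, not just the max over $\z$.

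So the refinement: instead of bounding the increment by $2\max_\z|T(\z,N\u)-\EE T(\0,N\u)|$, one writes $T(-\e_1,N\u)-T(\0,N\u)=[T(-\e_1,N\u)-T(\z,N\u)]+[T(\z,N\u)-T(\0,N\u)]$ where now $\z$ is the hitting point, the first bracket is a bounded (weight of an $\O(1)$-step segment, which for Gaussian weights has bounded second moment), and the second bracket is an increment between last-passage times to the same endpoint from two points $\0$ and $\z$ that typically lie within $\O(N^{2/3})$ of each other (this is where the superdiffusivity heuristic enters — but since that is not proved, one instead uses the \emph{unconditional} bound: $\z$ lies in the light cone, and $T(\z,N\u)-T(\0,N\u)$ is a sum of fluctuations controlled by influences/Lemma~\ref{conc_lem}). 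Concretely, I would apply Lemma~\ref{conc_lem} with $\Z$ the randomized starting point taking values in the hyperplane — the lemma is stated precisely to allow a random starting point in order to reduce $\max_\v I_\v(f)$ — and read off that $\var$ of the relevant quantity is $\O(N/\log N)$, giving via Cauchy--Schwarz an $L^2$ bound of the right order after accounting for the $N^{1/4}\log N$ slack coming from the polynomial number of hitting points each handled at scale $\sqrt{\log N}\cdot(N/\log N)^{1/2}$ balanced against a crude $L^2$ bound of order $\sqrt N$ on an event of probability $N^{-\Om(\log N)}$.

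The main obstacle, as the paper itself emphasizes, is precisely that $T$ fails the triangle inequality, so there is no a priori $\O(1)$ bound on the increment; everything hinges on showing that the ``extra wandering'' of the maximal path from $-\e_1$ before it reaches the hyperplane $x_1=0$ is not too large, and this is where Lemma~\ref{not_flat} (concavity/non-flatness of $g$) is needed: if $g$ were flat in some direction the path could drift arbitrarily far sideways at no cost, destroying the estimate. With non-flatness in hand, a large sideways excursion of the initial segment costs linearly in its length, so by a large-deviations argument (identical in spirit to the final display in the proof of Lemma~\ref{not_flat}) the hitting point $\z$ is within $\O(N^{3/4})$ — in fact one only needs $\O(N^{1-\eps})$ for some $\eps$ — of $\0$ with probability $1-\exp(-\Om(N^{\eps}))$, and then the union bound is over only $\O(N^{(1-\eps)(d-1)})$ points, which does not spoil the logarithmic factor. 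Combining this localization of $\z$ with the concentration Lemma~\ref{conc_d} applied to each of these polynomially many $T(\z,N\u)$ yields Lemma~\ref{shift_lemma}.
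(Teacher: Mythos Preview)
Your proposal does not close the gap; the decomposition you settle on cannot reach $N^{1/4}$. You write
\[
T(-\e_1,N\u)-T(\0,N\u)=\bigl[T(-\e_1,N\u)-T(\z,N\u)\bigr]+\bigl[T(\z,N\u)-T(\0,N\u)\bigr],
\]
where $\z$ is the first hit of $\{x_1=0\}$ by the maximal path from $-\e_1$. Your claim that the first bracket is the weight of an $\O(1)$-step segment is false: the segment from $-\e_1$ to $\z$ has $|\z|+1$ steps, and you yourself argue $|\z|$ can be as large as $N^{3/4}$. More fundamentally, the second bracket compares two last-passage times to the \emph{far} endpoint $N\u$, each with fluctuations of order $\sqrt{N}$ by Lemma~\ref{conc_d}; a union bound over polynomially many $\z$ then gives at best $\O(\sqrt{N\log N})$, exactly the wall you hit in your first attempt. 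Localizing $\z$ to $\O(N^{1-\eps})$ does not help, because the scale $\sqrt{N}$ comes from the length of the remaining path, not from $|\z|$. (Your intermediate idea of invoking Lemma~\ref{conc_lem} with a randomized starting point is circular: that is precisely how the paper uses Lemma~\ref{shift_lemma} inside the proof of Theorem~\ref{variance_thm}.)

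The missing idea is to compare passage times to the \emph{near} point $\a:=\z$ rather than to $N\u$. Set $h(\x)=T(-\e_1,\x)-T(\0,\x)$. If $\a\in\gamma$ then the portion of $\gamma$ from $\a$ to $N\u$ is a candidate path for $T(\0,N\u)$ as well, whence the pathwise monotonicity $h(N\u)\le h(\a)$. Now $h(\a)$ is a difference of two passage times over distance $|\a|$, so by Lemma~\ref{conc_d} its fluctuations are on scale $\sqrt{|\a|}$, not $\sqrt{N}$. One then splits on the size of $|\a|$: for $|\a|^2<t^{4/3}N$ one bounds $h(\a)$ directly (fluctuations $\lesssim t^{1/3}N^{1/4}$, mean controlled via a max over $\O(N^d)$ subpaths), while for $|\a|^2\ge t^{4/3}N$ one shows $\PP(\a\in\gamma)=\exp(-\Om(|\a|^2/N))$. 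This last step is where Lemma~\ref{not_flat} genuinely enters, but not as a localization of $\z$ alone: the paper compares $T(\a,N\u)$ with $T(\b,N\u)$ for a reflected point $\b$ on $\{x_1=x_2\}$, using a measure-preserving reflection of the environment to show $\EE[T(\a,N\u)-T(\b,N\u)]=\O(\sqrt{N\log N})$, and Lemma~\ref{not_flat} to show $\EE[T(-\e_1,\a)-T(-\e_1,\b)]=-\Om(|\a|)$. Combining, $\PP\bigl(h(N\u)\ge 2tN^{1/4}\bigr)\le t^{2d/3}N^{d/2}\exp(-\Om(t^{4/3}))$ for $t\ge(c\log N)^{3/4}$, which integrates to $\nt{h(N\u)}=\O(N^{1/4}\log N)$.
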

The proof of Lemma \ref{shift_lemma} uses Lemmas \ref{conc_d} and \ref{not_flat}.
Before we give the proof, we will show how it is used to deduce Theorem \ref{variance_thm}.
\begin{proof}[Proof of Theorem \ref{variance_thm}]
Let $m=\lfloor N^{1/8}\rfloor$, let $S= \{1,\dots,d m^2\}$,
and let $(\PP,\Om)$ be defined as in Lemma \ref{conc_lem}. By \cite[Lemma 3]{BKS} there exists a constant $c$, independent of $m$, and a random variable $\Z:\Om\to\{1,\dots,m\}^d$, such that
\begin{romlist}
\item $\Z$ is independent of $\{\om(\v):\v\in\zzd\}$,
\item if $\om(s)=\om'(s)$ for all but one $s\in S$, $|\Z(\om)-\Z(\om')|= 1$, and
\item for all $\z$, $\PP(\Z(\om)=\z) \le (c/m)^d$.
\end{romlist}
Let $f(\om)=T(\Z,N\u+\Z)$; by translation invariance, $\var[f]=\var[T(\0,N\u)]$. The effect of randomizing the start and end points is to spread out the influence of any given vertex weight. Let $\v\in\zzd$. The range of $\v-\Z$ is
\[
\text{range}[\v-\Z]=\v-\{1,\dots,m\}^d.
\]
If $\eta$ is a directed path from $\0$ to $N\u$, the intersection of $\eta$ and $\text{range}[\v-\Z]$ has size at most $md$, and so
\[
\PP(\v\in\eta+\Z) \le md (c/m)^d = 1/\,\Om(m).
\]
Let $\gamma$ denote the $\om$-maximal path corresponding to $f$ from $\Z$ to $N\u+\Z$. By the independence of $\Z$ and $\gamma-\Z$,
\[
\ivf=\PP(\v\in \gamma) = \sum_\eta \PP(\gamma=\eta+\Z) \PP (\v \in \eta+\Z)=1/\,\Om(m).
\]
Hence $\max_\v \ivf = 1/\Om(m)$. By Lemma \ref{conc_lem}, as $|N\u|=\O(N)$, 
\begin{align}\label{almost done}
\var[f]\log\frac{\var[f]}{\O(N^{7/8}) + I_S(f)} \le \O(N) + 2I_S(f).
\end{align}
For each $s\in S$, changing the value of $\om(s)$ causes $\Z$ to move a unit distance. By translation invariance and the triangle inequality,
\begin{align*}
2\nt{\De_s f} =&\nt{T(-\e_1,N\u-\e_1)-T(\0,N\u)}\\
\le &\nt{T(-\e_1,N\u)-T(0,N\u)}+\nt{T(-\e_1,N\u-\e_1)-T(-\e_1,N\u)}.
\end{align*}
Hence, by Lemma \ref{shift_lemma}, we can bound
\begin{align*}
I_S(f)=\sum_{s\in S} \nt{\De_s f}^2
=dm^2\,\O(N^{1/4}\log N)^2=\O(N^{7/8}).
\end{align*}
The result follows by \eqref{lambert} and \eqref{almost done}.
\end{proof}
It remains to prove Lemma \ref{shift_lemma}. 
Note that if two random variables have Gaussian tails, so does their sum:
\begin{remk}\label{convolution}
Let $X_1,X_2$ denote random variables such that
\[
\PP[X_i\ge t] =\exp(-\Om(t^2)), \qq t\ge 0;\quad i=1,2.
\]
Then,
\[
\PP[X_1+X_2\ge t] =\exp(-\Om(t^2)),\qq t\ge 0.
\]
\end{remk}

\begin{proof}[Proof of Lemma \ref{shift_lemma}]
Let $\gamma$ denote the $\om$-maximal path from $-\e_1$ to $N\u$. The key to proving this lemma is showing that almost all of the path $\gamma$ is accessible from $\0$. 
Let $\a$ denote a point on the hyperplane $x_1=0$:
\begin{align*}
\0\le\a\le N\e_2+\dots+N\e_d.
\end{align*}
Let $h(\x)=T(-\e_1,\x)-T(\0,\x)$. 
If $\a\in\gamma$ then $h(N\u)\le h(\a)$ and so
\begin{align}\label{claimb}
&\PP\left(h(N\u)\ge 2 tN^{1/4}\right)\nonumber\\
&\le \sum_\a \PP\left(\a\in\gamma \text{ and } h(\a) \ge 2 tN^{1/4}\right)\nonumber\\
& \le 
\sum_{\a:|\a|^2<t^{4/3}N}\PP\left(h(\a) \ge 2 tN^{1/4}\right)
+ 
\sum_{\a:|\a|^2\ge t^{4/3}N}\PP(\a\in\gamma).
\end{align}
Let $c$ represent a positive constant and take $t\ge (c\log N)^{3/4}$.

The random variable $h(\a)$ is the difference of two last-passage times. By Lemma \ref{conc_d} and Remark \ref{convolution},
\be\label{claimc}
\PP\left(h(\a) -\EE [ h(\a) ] \ge tN^{1/4}\right)=\exp\left(-\Om\left(\frac{t^2N^{1/2}}{|\a|\vee 1}\right)\right).
\ee
To find an upperbound for $\EE [ h(\a) ]$, note that for some $\a'$ such that $\0\le \a'\le \a$,
\[
T(-\e_1,\a)=T(-\e_1,-\e_1+\a')  + T(\a',\a) + \om(\a').
\]
For each fixed $\a'$, the expected value of the right-hand side is at most $\EE[T(0,\a)+\om(\0)]$.
The number of values $\a'$ can take is less than $N^d$.
If $|\a|^2< t^{4/3}N$ and $c$ is sufficiently large then Lemma \ref{conc_d} yields
\be\label{claimd}
\EE [ h(\a) ] = \O\left(\sqrt{|\a|\log (N^d)}\right) \le tN^{1/4}.
\ee
Assuming again that $c$ is sufficiently large, we claim that
\be\label{claim}
|\a|^2\ge t^{4/3} N \implies \PP(\a\in\gamma)=\exp\left(-\Om\left({|\a|^2}/{N}\right)\right).
\ee
We can assume without loss of generality that $\a=a_2\e_2+\dots+a_d\e_d$ with $a_2\ge |\a|/(d-1)$.  Let $\b$ be defined by
\begin{align*}
\b  = \a+\left\lfloor\frac{a_2}2\right\rfloor (\e_1-\e_2),
\end{align*}
see Figure \ref{fig:thm2}. Note that $0\le \a,\b\le N\u$ and $|\a|=|\b|$.
\begin{figure}[tb]
\begin{center}
\begin{picture}(0,0)%
\includegraphics{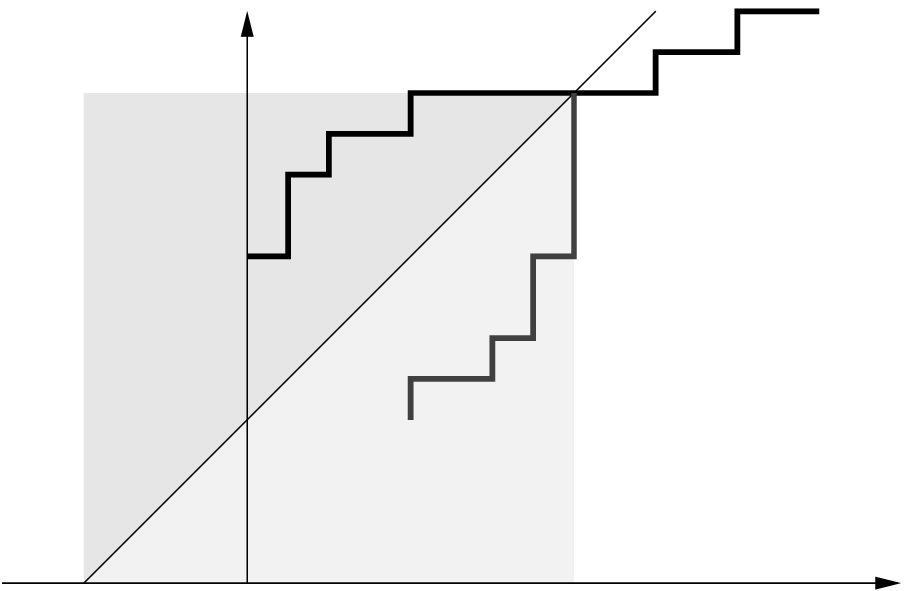}%
\end{picture}%
\setlength{\unitlength}{3439sp}%
\begingroup\makeatletter\ifx\SetFigFont\undefined%
\gdef\SetFigFont#1#2#3#4#5{%
  \reset@font\fontsize{#1}{#2pt}%
  \fontfamily{#3}\fontseries{#4}\fontshape{#5}%
  \selectfont}%
\fi\endgroup%
\begin{picture}(4974,3240)(-1361,-15403)
\put(361,-13831){\makebox(0,0)[lb]{\smash{{\SetFigFont{10}{12.0}{\familydefault}{\mddefault}{\updefault}{\color[rgb]{0,0,0}$L$}%
}}}}
\put(1666,-12571){\makebox(0,0)[lb]{\smash{{\SetFigFont{10}{12.0}{\familydefault}{\mddefault}{\updefault}{\color[rgb]{0,0,0}$\q$}%
}}}}
\put(-134,-13606){\makebox(0,0)[lb]{\smash{{\SetFigFont{10}{12.0}{\familydefault}{\mddefault}{\updefault}{\color[rgb]{0,0,0}$\a$}%
}}}}
\put(811,-14641){\makebox(0,0)[lb]{\smash{{\SetFigFont{10}{12.0}{\familydefault}{\mddefault}{\updefault}{\color[rgb]{0,0,0}$\b$}%
}}}}
\put(3196,-12346){\makebox(0,0)[lb]{\smash{{\SetFigFont{10}{12.0}{\familydefault}{\mddefault}{\updefault}{\color[rgb]{0,0,0}$N\u$}%
}}}}
\put( 46,-15271){\makebox(0,0)[lb]{\smash{{\SetFigFont{10}{12.0}{\familydefault}{\mddefault}{\updefault}{\color[rgb]{0,0,0}$\0$}%
}}}}
\put(-314,-12346){\makebox(0,0)[lb]{\smash{{\SetFigFont{10}{12.0}{\familydefault}{\mddefault}{\updefault}{\color[rgb]{0,0,0}$x_2$}%
}}}}
\put(3376,-15226){\makebox(0,0)[lb]{\smash{{\SetFigFont{10}{12.0}{\familydefault}{\mddefault}{\updefault}{\color[rgb]{0,0,0}$x_1$}%
}}}}
\end{picture}%
\end{center}
\caption{The reflection $\q\b$ of the path corresponding to $T(\a,N\u)$ when $d=2$. When $a_2$ is even, $\b$ lies exactly on the line $x_1=x_2$.\label{fig:thm2}}
\end{figure}
Let
\begin{align*}
D_1&=T(\a,N\u)-T(\b,N\u),\\
D_2&=T(-\e_1,\a)- T(-\e_1,\b).
\end{align*} 
We have chosen $D_1$ and $D_2$ so that $\PP(\a\in\gamma)\le \PP(D_1+D_2>0)$. 
By Remark \ref{convolution} and Lemma \ref{conc_d},
\[
\PP(D_1 + D_2 \ge \EE[D_1+D_2]+t\sqrt{N}) =\exp(-\Om(t^2)),\qq t\ge 0.
\]
To check \eqref{claim} it is sufficient to show that $\EE[D_1+D_2]=-\Om(|\a|)$ for $\a$ such that  $|\a|\ge t^{2/3}N^{1/2} \ge  \sqrt{c N\log N}$. If $c$ is sufficiently large, this follows from the bounds
\be\label{work}
\EE [D_1] = \O(\sqrt {N \log N})\qtext{and} \EE [D_2] =-\Om(|\a|).
\ee
Note that \eqref{work} only bounds $\EE[D_1]$ above; $\EE[D_1]$ may well be negative.

Let $L$ denote the hyperplane $x_2=x_1+\lceil a_2/2\rceil$; $\b$ is the reflection in $L$ of $\a$. All paths from $\a$ to $N\u$ must pass through $L$. Choose $\q\in L$ such that the $\om$-maximal path from $\a$ to $N\u$ passes through $\q$ with probability at least $N^{1-d}$.
Let $R$ denote the set of points that are less than $\q$ in the partial order, and that are on the opposite side of $L$ to $\0$. Let $R'$ be the reflection of $R$ in $L$.  
Consider the bijection $\zzd\lra\zzd$ obtained by reflecting $R\lra R'$ in $L$. If the $\om$-maximal path from $\a$ to $N\u$ passes through $\q$, then the bijection produces a configuration $\om'$ such that \[T_{\om'}(\b,N\u)\ge T(\a,N\u).\] 
Let $F_\x$ denote the cumulative distribution function of $T(\x,N\u)$. 
The bijection implies that \[F^{-1}_{\b}(1-N^{1-d})\ge F^{-1}_\a(N^{1-d}),\] and so by concentration $\EE [D_1] =\O(\sqrt{N\log N})$.

The upper bound on $\EE [D_2]$ follows from Lemma \ref{not_flat} and Proposition \ref{shape proposition}.

By \eqref{claimb}--\eqref{claim}, when $t\ge (c\log N)^{3/4}$
\begin{align*}
\PP\left(h(N\u)\ge 2 tN^{1/4}\right)\le t^{2d/3}N^{d/2}\exp(-\Om(t^{4/3})).
\end{align*}
Hence $\nt{h(N\u)}=\O(N^{1/4}\log N)$.
\end{proof}

\section{Directed polymers}\label{sec dp}
We have considered directed last passage percolation on $(\zzd,\vec{E})$ \eqref{E def}. Piza \cite{Piza} considered the directed polymer model on the same directed graph. Let $S_N=\{\x\in\zzd_+:|\x|=N\}$ denote the set of points withing reach of $\0$ in $(\zzd,\vec{E})$ by directed paths of length $N$. The weight of the polymer in the ground state of the directed polymer model is 
\[
T(\0,S_N):=\sup_{\x\in S_N} T(\0,\x).
\]
The directed polymer model is more often studied on an alternative directed graph.
Think of $\zzd$ as containing $d-1$ space dimensions, with undirected nearest-neighbor edges, and one time dimension. 
This corresponds to a set of directed edges,
\[
\hat{E}=\hat{E}(\zzd)=\{(\u,i)\to(\v,i+1):\u,\v\in\zzdm,|\u-\v|=1\}.
\]
In the context of $(\zzd,\hat{E})$, we will write $\hat{T}(\x,\y)$ for the last passage time from $\x$ to $\y$, $\hat{g}(\x)$ for the asymptotic last passage time in direction $\x$, and $\hat{S}_N$ for the set of points at graph distance $N$ from $\0$.
\begin{thm}\label{variance_thm2}
Let $d\ge 2$, and let $\om=(\om(\x):\x\in\zzd)$ be
a collection of independent, standard normal random variables; 
\[
\var[T(\0,{S}_N)]=\O(N/\log N)\qtext{and} \var[\hat{T}(\0,\hat{S}_N)]=\O(N/\log N).
\]
\end{thm}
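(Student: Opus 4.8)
The plan is to rerun the argument of Section \ref{variance_section} with the point-to-point time $T(\0,N\u)$ replaced by a point-to-set one. Take first $T(\0,S_N)$ on $(\zzd,\vec E)$ and set $f(\om)=T(\Z,\Z+S_N)$, the weight of the best directed path of length $N$ starting at $\Z$, where $\Z$ is a randomized starting point drawn over a box of side $m=\lfloor N^{1/8}\rfloor$ exactly as in the proof of Theorem \ref{variance_thm}. All of the concentration input transcribes unchanged: the maximal path $\gamma$ counted by $f$ is almost surely unique and has exactly $N$ vertices, so $\ivf=\PP(\v\in\gamma)$ and $\sum_\v\ivf=N$; the counting argument of the proof of Theorem \ref{variance_thm} applies verbatim and gives $\max_\v\ivf=1/\Om(m)$; and the point-to-set analogue of Lemma \ref{conc_d} follows by the same iteration of Lemma \ref{conc_lem} on the tails of $T(\0,S_N)$. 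Feeding these into Lemma \ref{conc_lem} and \eqref{lambert} reduces the statement for $T(\0,S_N)$ to a point-to-set shift lemma
\[
\nt{T(-\e_1,-\e_1+S_N)-T(\0,S_N)}=\O(N^{1/4}\log N),
\]
just as Theorem \ref{variance_thm} was reduced to Lemma \ref{shift_lemma}. Changing one auxiliary bit moves $\Z$ by a unit vector, and by the coordinate symmetry of $S_N$ it suffices to treat shifts by $\pm\e_1$, the two signs being similar; I describe the $-\e_1$ case.

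To prove this shift lemma I would follow the proof of Lemma \ref{shift_lemma}. Let $\gamma$ be the maximal length-$N$ path from $-\e_1$ and let $\a$ be the vertex at which $\gamma$ first meets the hyperplane $x_1=0$; then $\a\ge\0$, and splitting $\gamma$ at $\a$, together with the cheap estimate $\nt{T(\0,S_{N-1})-T(\0,S_N)}=\O(\sqrt{\log N})$ (controlled by a single vertex weight), gives that the quantity above is at most $h(\a):=T(-\e_1,\a)-T(\0,\a)$ up to $\O(\sqrt{\log N})$. One then splits over $\a$ near and far from the diagonal exactly as in \eqref{claimb}: near the diagonal $h(\a)$ is controlled by the point-to-set form of Lemma \ref{conc_d} and by \eqref{claimd}, and for $\a$ far from the diagonal $\PP(\a\in\gamma)=\exp(-\Om(|\a|^2/N))$ by the same reflection in the hyperplane $L$. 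Two features make this easier than Lemma \ref{shift_lemma}. First, the analogue of $D_1$ is $T(\a,\a+S_{N-|\a|-1})-T(\b,\b+S_{N-|\a|-1})$, which has mean \emph{zero} by translation invariance (both are point-to-set times from a single vertex with the same radius, since $|\a|=|\b|$), so the sub-argument bounding $\EE[D_1]$ is unnecessary; the analogue of $D_2$ is handled by Lemma \ref{not_flat} and Proposition \ref{shape proposition} as before. Second, one must rule out the new possibility that $\gamma$ never leaves the hyperplane $x_1=-1$, so that $\a$ is undefined: on that event $T(-\e_1,-\e_1+S_N)$ is a $(d-1)$-dimensional point-to-set time using vertex weights disjoint from those of $T(\0,S_N)$, and comparing means shows the event has probability $\exp(-\Om(N))$ --- this uses that the best $d$-dimensional rate strictly exceeds the best $(d-1)$-dimensional one, again a consequence of Lemma \ref{not_flat} and concavity.

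For $\hat T(\0,\hat S_N)$ on $(\zzd,\hat E)$ the same scheme applies, with two extra points. First, directed paths in $\hat E$ from $\0$ and from a space-neighbour of $\0$ visit vertices of opposite $\ell_1$-parity, so I would randomize the start over a side-$m$ box in the $d-1$ space coordinates \emph{scaled by $2$}; then changing one auxiliary bit shifts the start by $2$ in a space direction, keeping the two path families parity-compatible at the cost of only a constant factor. The influence bound is in fact easier here, since a directed path in $\hat E$ meets each time-slice exactly once, giving $\max_\v\hat I_\v(f)=\O(m^{-(d-1)})=1/\Om(m)$ for $d\ge2$; and when $d=2$ the graphs $\hat E$ and $\vec E$ coincide up to rotation by $45^\circ$, so $\hat T(\0,\hat S_N)=T(\0,S_N)$ and no new work is needed. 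Second, the shift lemma $\nt{\hat T(\0,\hat S_N)-\hat T(\w,\w+\hat S_N)}=\O(N^{1/4}\log N)$, with $\w$ twice the first space basis vector, is proved as above once one knows (a) that the maximal path from the shifted start becomes accessible from $\0$ quickly --- the first time $\tau$ at which its space position has $\ell_1$-norm at most the elapsed time has $\PP(\tau\ge k)$ decaying fast, since $\{\tau\ge k\}$ forces the space-walk to increase its distance from the origin at each of its first $k$ steps, a constrained behaviour that is strictly suboptimal --- and (b) a not-flat statement for $\hat g$ playing the role of Lemma \ref{not_flat}, which I would establish by transcribing the coupled-environment construction in the proof of Lemma \ref{not_flat} (using that in $\hat E$ one can return to a given space position in two time-steps by two distinct routes, against a single route that moves two steps away).

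I expect the main obstacle to be the shift lemma in this point-to-set, free-endpoint setting, and particularly its $\hat E$ version: one has to push the reflection argument of Lemma \ref{shift_lemma} through while tracking the parity constraint, control the ``bad'' event that the optimal path from the shifted start fails to become accessible, and --- the genuinely new piece --- prove the not-flat property of $\hat g$ from scratch. Everything else, namely the concentration inequalities, the influence counting, and the final optimization via \eqref{lambert}, is a routine transcription of Sections \ref{formal_defs}--\ref{variance_section}.
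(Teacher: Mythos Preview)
Your treatment of $T(\0,S_N)$ matches the paper's: the only modification the paper flags is precisely your observation that $\EE[D_1]=0$ by translation invariance once the endpoint is free, so the reflection sub-argument bounding $\EE[D_1]$ becomes unnecessary. You are in fact more careful than the paper in isolating the edge case where the shifted optimal path never crosses $x_1=0$.

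For $\hat T(\0,\hat S_N)$ your route diverges from the paper's. You propose a direct attack: randomize the start over a doubled space box to respect parity, shift by $2\e_1$, and reprove the not-flat lemma for $\hat g$ by transplanting the coupled-environment construction of Lemma~\ref{not_flat} to $(\zzd,\hat E)$. The paper instead exploits a single structural observation: the linear map sending $\e_i\mapsto\e_i+\e_d$ for $i<d$ and $\e_d\mapsto-\e_1+\e_d$ embeds $(\zzd,\vec E)$ as a subgraph of $(\zzd,\hat E)$. This embedding does three jobs at once. First, it turns the not-flat statement for $\hat g$ (Lemma~\ref{bar_not_flat}) into an immediate corollary of Lemma~\ref{not_flat}, since $\vec E$-paths to $\a$ biject with $\hat E$-paths to $\hat\a$ while $\vec E$-paths to $\b$ only inject into $\hat E$-paths to $\hat\b$. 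Second, applying the embedding to the $\Z$ of Theorem~\ref{variance_thm} yields the randomized start $\hat\Z$, so no new randomization needs to be built. Third, a unit change in $\Z$ becomes a single $\hat E$-step of $\hat\Z$ (e.g.\ $\e_1-\e_d$), so parity is automatic and the relevant shift is by one $\hat E$-edge rather than by $2\e_1$. Your approach should go through, but the embedding replaces your three separate sub-arguments --- the doubled-box randomization, the parity bookkeeping, and above all the from-scratch not-flat proof you single out as the ``genuinely new piece'' --- by one map.
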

\begin{proof}
The proof of Theorem \ref{variance_thm} can be applied almost directly. We will first show that $\var[T(\0,S_N)]=\O(N/\log N)$.
Note that we can painlessly replace $\x$ with $S_N$ in Lemma \ref{conc_d},
\begin{align*}
\PP\Big(
\big|T(\0,{S}_N)-\EE T(\0,{S}_N)\big|
\ge t\sqrt{N}\Big)
=\exp(-\Om(t^2)).  
\end{align*}
If we replace $N\u$ with $S_N$ in Section \ref{variance_section}, the only modification needed relates to the upper bound on $\EE [D_1]$ in the proof of Lemma \ref{shift_lemma}. Replacing $N\u$ with $S_N$ gives $D_1=T(\a,S_N)-T(\b,S_N)$. By translation invariance, $\EE [D_1]=0$. This satisfies \eqref{work} and so $T(\0,S_N)$ has variance sublinear in $N$.

We will now prove the corresponding result for $(\zzd,\hat{E})$. Note that while the asymptotic traversal time function $g$ for $(\zzd,\vec{E})$ is defined on $\RR_+^d$, $\hat{g}$ is defined on the `space-time' cone $\sC=\{\x:x_d\ge |x_1|+\dots+|x_{d-1}|\}$. 

Recall that $(\ZZ^2,\vec{E})$ and $(\ZZ^2,\hat{E})$ are equivalent, up to a rotation by 45$^\circ$.
More generally, if $d\ge 2$, identify the directions $\e_1,\e_2,\dots,\e_{d-1},\e_d$ of $(\zzd,\vec{E})$ with the directions $\e_1+\e_d,\e_2+\e_d,\dots,\e_{d-1}+\e_d,-\e_1+\e_d$ in $(\zzd,\hat{E})$; this embeds $(\zzd,\vec{E})$ into $(\zzd,\hat{E})$ as a subgraph.
\begin{lem}\label{bar_not_flat}
Suppose the conditions of Lemma \ref{not_flat} are satisfied. 
Let $\hat{\a}$ denote a point on the surface of $\sC$.
There is a point $\hat{\b}$ in $\sC$ such that $\hat{a}_d=\hat{b}_d$ and
\begin{align*}
\EE[\hat{T}(\0,\hat{\b})]\ge\EE[\hat{T}(\0,\hat{\a})]+\Om(|\hat{\a}|).
\end{align*}
\end{lem}
\begin{proof}
Without loss of generality, assume $a_1\ge a_2,\dots,a_{d-1}\ge 0$. 
Let
\begin{align*}
\a=\hat{\a}-a_d\e_d, \qq \b=\a+\lfloor a_1/2\rfloor(-\e_1+\e_d), \qq \hat{\b}=\hat{\a}-2\lfloor a_1/2\rfloor\e_1. 
\end{align*}
The embedding produces a bijection between the set of $\vec{E}$-paths from $\0$ to $\a$ and the set of $\hat{E}$-paths from $\0$ to $\hat{\a}$:
\[
\EE [T(\0,\a)] =\EE [\hat{T}(\0,\hat{\a})].
\]
The embedding produces an injective function from the set of $\vec{E}$-paths from $\0$ to $\b$ to the set of $\hat{E}$-paths from $\0$ to $\hat{\b}$:
\[
\EE [T(\0,\b)] \le \EE [\hat{T}(\0,\hat{\b})].
\]
Use Lemma \ref{not_flat} with Proposition \ref{shape proposition} to compare $\EE [T(\0,\a)]$ and $\EE [T(\0,\b)]$.
\end{proof}
Take $m$, $S$, $\EE$, $\Om$ and $\Z$ from the proof of Theorem \ref{variance_thm}. 
Let $\hat{\Z}$ denote the random vector obtained by applying to $\Z$ the embedding from $(\zzd,\vec{E})$ into $(\zzd,\hat{E})$; let $\hat{f}=\hat{T}(\hat{\Z},\hat{\Z}+\hat{S}_N)$.
Following the proof of Theorem \ref{variance_thm}, in place of Lemma \ref{shift_lemma} we must show that, say, 
\[
\nt{\hat{T}(\e_1-\e_d,\hat{S}_N)-\hat{T}(\0,\hat{S}_N)}=\O(N^{1/4}\log N).
\]
To adapt the proof of Lemma \ref{shift_lemma}, we must show that almost all of the maximal path $\gamma$ from $\e_1-\e_d$ to $\hat{S}_N$ lies inside the space-time cone generated by the origin, $\sC$.
Let $\hat{\a}$ represent a point on the surface of $\sC$, representing where $\gamma$ enters the cone. Choose $\hat{\b}$ using Lemma \ref{bar_not_flat}; setting
\begin{align*}
D_1 &= \hat{T}(\hat{\a},\hat{S}_N)-\hat{T}(\hat{\b},\hat{S}_N),\qq D_2 = \hat{T}(\e_1-\e_d,\hat{\a})-\hat{T}(\e_1-\e_d,\hat{\b}),
\end{align*}
we get $\EE[D_1]=0$ and $\EE[D_2] =-\Om(|\hat{\a}|)$.
The remainder of the proof holds mutatis mutandis.
\end{proof}

\section{Other vertex weight distributions}\label{other}
So far, we have  restricted our attention to Gaussian vertex weights.
The proof of Theorem \ref{variance_thm} can be modified to accommodate other vertex weight distributions. 
For simplicity, we will just look at two examples: the uniform $[0,1]$ distribution and the gamma distribution. 

Let $S$ denote a countable set, and let $\EE$ denote a product measure that supports the vertex weights $(\om(\v):\v\in\zzd)$, and auxiliary Bernoulli($1/2)$ random variables $(\om(s):s\in S)$. As in the Gaussian case, define the influence of vertex $\v\in\zzd$ to be
\[
\ivf=\PP\big(\pfo\not=0\big).
\]
\subsection{Uniform $[0,1]$ vertex weights}
By \cite[Corollary 2.4]{BR}, for $f\in H_1^2$,
\begin{align*}
\var[f]\log\frac{\var[f]}{\sum_\v \no{\De_\v f}^2 +\sum_s \no{\De_s f}^2} \le \frac{2}{\pi^2} \sum_\v \ivf +2\sum_s \nt{\De_s f}^2.
\end{align*}
This effectively replaces \cite[Corollary 2.2]{BR} in the proof of Lemma \ref{conc_lem}. 
Inequality \eqref{cg2} is now much simpler; it is easily seen to hold with $c_G$ replaced by $c_U=1/2$. Theorem \ref{variance_thm} therefore also applies with uniform $[0,1]$ vertex weights.

\subsection{Gamma $\Gamma(\al,\beta)$ vertex weights}
\def\rvf{J_\v(f)}
Define `weighted' influences
\[
\rvf= \nt{\pfo \sqrt{1+\om(\v)}}^2,\qq \v\in\zzd.
\]
Let $f(\om)=T(\Z,\x+\Z)\wedge B\vee A$ with $0\le A < B \le \oo$. 
By Corollary 2.3 of \cite{BR}, there is a constant $C_{\al,\beta}$ such that,
\begin{align*}
\var[f]\log&\frac{\var[f]}{
\sum_\v \no{\De_\v f}^2 +
\sum_{s\in S} \no{\De_s f}^2
}
\le 
C_{\al,\beta}\sum_\v \rvf+
2\sum_{s\in S} \nt{\De_s f}^2.
\end{align*}
Let $X=\om(\0)$ so that $X$ represent a $\Gamma(\al,\beta$) random variable. We can check that $c_{\Gamma(\al,\beta)}<\oo$, where
\[
c_{\Gamma(\al,\beta)}:= \sup_{0\le a<b\le \oo} \frac{\EE|X\wedge b \vee a - \EE(X\wedge b \vee a)|}{\EE[(1+X) 1_{\{a<X<b\}}]}.
\]
In place of Lemma \ref{conc_lem}, we have 
\begin{align*}
\var[f]\log\frac{\var[f]}{c_{\Gamma(\al,\beta)}^2\sum_\v \rvf^2 +I_S(f)} &\le C_{\al,\beta}\sum_\v \rvf +2 I_S(f).
\end{align*}
Note that
\begin{align*}
\sum_\v \rvf &\le 
\begin{cases} 
u(|\x|+B) &\text{if } B<\oo\\ 
|\x|+|\x|g(\e_1+\dots+\e_d) & \text{if } B=\oo,\\
\end{cases}
\end{align*}
and, as the $\Gamma(\al,\beta)$ distribution has an exponential tail,
\[
\rvf/\ivf =\O\Big( \log 1/\ivf\Big), \qq \v\in\zzd.
\]
Adapting the proof of Lemma \ref{conc_d} gives a slightly weaker concentration inequality:
for $\x\in\zzd_+$, 
\[
\PP\Big(
|{T}(\0,\x)-\EE {T}(\0,\x)|
\ge t\Big)
=
\begin{cases}
\exp(-\Om(t^2/|\x|)), & t \le |\x|,\\
\exp(-\Om(t)), & t \ge |\x|.\\
\end{cases}  
\]
Nonetheless, this is sufficient for following the argument in Section \ref{variance_section}.
Take $f(\om)=T(\Z,N\u+\Z)$ from the proof of Theorem \ref{variance_thm}. Following the proof gives $\max_\v\ivf=1/\,\Om(m)$, and hence that
\[
\sum_\v \rvf^2 \Big/\sum_\v \rvf  \le \max_\v\rvf=1/\,\Om(m/\log m).
\]
Once again, $\var[f]=\O(N/\log N)$.

\section{Acknowledgement}
Thanks to Lung-Chi Chen, Geoffrey Grimmett and the referee for their helpful comments about the problem.

\def\cprime{$'$}

\end{document}